\newcounter{stmcounter}[section]
\newcounter{thmMaincounter}
\newtheorem{formula}{}[section]
\newtheorem{proposition}[formula]{Proposition}
\newtheorem{corollary}[formula]{Corollary}
\newtheorem{lemma}[formula]{Lemma}
\newtheorem{theorem}[formula]{Theorem}
\newtheorem{theoremM}[thmMaincounter]{Theorem}
\theoremstyle{definition}
\newtheorem{definition}[formula]{Definition}
\newtheorem{example}[formula]{Example}
\theoremstyle{remark}
\newtheorem{remark}[formula]{Remark}
\newcommand{\sk}{\mathrm{sk}}
\newcommand{\Z}{\mathbb Z}
\newcommand{\C}{\mathbb C}
\newcommand{\Q}{\mathbb Q}
\newcommand{\N}{\mathbb N}
\newcommand{\lb}{\lbrace}
\newcommand{\rb}{\rbrace}
\newcommand{\lv}{\lvert}
\newcommand{\rv}{\rvert}
\newcommand{\mc}{\mathcal}
\DeclareMathOperator{\rk}{rk}
\DeclareMathOperator{\ord}{ord}
\DeclareMathOperator{\Tor}{Tor}
\DeclareMathOperator{\Id}{Id}
\DeclareMathOperator{\hoc}{hocolim}
\DeclareMathOperator{\Hm}{Hom}
\DeclareMathOperator{\lm}{lim}
\DeclareMathOperator{\lk}{lk}
\DeclareMathOperator{\srep}{srep}
\DeclareMathOperator{\lmm}{lim}
\DeclareMathOperator{\tr}{tr}
\DeclareMathOperator{\Sk}{Sk}
\DeclareMathOperator{\cosk}{cosk}
\DeclareMathOperator{\Cone}{Cone}
\DeclareMathOperator{\Top}{Top}
\renewcommand{\leq}{\leqslant}
\renewcommand{\geq}{\geqslant}
\renewcommand{\phi}{\varphi}
\newcommand{\ot}{\leftarrow}
\newcommand{\Cc}{\mathbf{C}}
\newcommand{\Pp}{\mathbf{P}}
\newcommand{\sset}{\mathbf{sSet}}
\newcommand{\stp}{\mathbf{sTop}}
\newcommand{\Mod}{\mathbf{Mod}}
\renewcommand{\Top}{\mathbf{Top}}
\newcommand*\pFqskip{8mu}
\newcommand*\pFq{\begingroup
	\catcode`\,\active
	\def ,{\mskip\pFqskip\relax}%
	\dopFq
}
\def\dopFq#1#2#3#4#5{%
	{}_{#1}F_{#2}\biggl[\genfrac..{0pt}{}{#3}{#4};#5\biggr]%
	\endgroup
}
\begin{document}
	
	\title[Rational cohomology of toric diagrams]{Rational cohomology of toric diagrams}
	
	\author{Grigory Solomadin}
	\address[G.\,Solomadin]{Philipps-Universit\"at Marburg, Germany}
	\email{grigory.solomadin@gmail.com}
	
	\subjclass[2020]{57S12, 55U10, 55N91}
	\keywords{Homotopy colimit, toric diagram, toric variety, Betti numbers}
	
	\begin{abstract}
		In this note, (rational) Betti numbers of homotopy colimits for toric diagrams and their classifying spaces are described in terms of sheaf cohomology over CW posets.
		We prove for any~$T$-diagram~$D$ over any CW poset that Cohen-Macaulayness (over~$\Q$) of the~$T$-action on~$\hoc D$ is equivalent to acyclicity for a certain sheaf.
		The ordinary and bigraded Betti numbers are computed for skeletons of equivariantly formal spaces from this class (in particular, of compact smooth toric manifolds).
	\end{abstract}
	
	\date{\today}
	
	\maketitle
	
	\section{Introduction}
	
	A diagram~$D\colon \Cc\to\Top$ over a small category~$\Cc$ is a functor in the category~$\Top$ of CGWH topological spaces and continuous maps between them.
	For any diagram~$D$ the spectral sequence of the skeletal filtration on the nerve of~$\Cc$ (in singular cohomology with any PID~$\Bbbk$ coefficients) converges to~$H^*(\hoc D;\Bbbk)$~\cite{bo-ka-72, bo-87}.
	Assume that~$\Cc=\Pp$ is a finite graded poset.
	As it is well known, this spectral sequence is isomorphic to the Mayer-Vietoris spectral sequence (MVSS, for short) for the open cover of~$\hoc D$ induced by the open stars of~$\Pp$, e.g. see~\cite[\S5]{so-to-26}.
	The second page of this spectral sequence is given by the respective cohomology groups of the sheaf~$H^*(D)$ on the poset~$\Pp^{op}$, or equivalently by right derived functors of~$\lm H^*(D)$.
	If~$\Pp$ is a CW poset~\cite{bj-84} (equivalently,~$\Pp^{op}$ is an orientable homology manifold) then the sheaf (co)homology groups are isomorphic to the respective cellular (co)homology, provided that the sheaf stalks are free~$\Bbbk$-modules~\cite{ev-tu-15}.
	A diagram is called a \textit{toric diagram} if it is a functor in the category whose objects are compact tori (i.e.~$(S^{1})^{n}$,~$n\in\Z_{\geq 0}$) and whose arrows are group homomorphisms~\cite{we-zi-zi-99}.
	A toric diagram~$D$ is called a \textit{$T$-diagram} if there is an object-wise epimorphism~$\kappa T\to D$ of diagrams, where~$\kappa T$ denotes the constant diagram.
	For any toric diagram, as well as its composition with the classifying functor, MVSS (over~$\Q$) abuts at its second page by natural formality over~$\Q$ of Eilenberg-MacLane spaces established in~\cite{so-to-26}.
	
	Recall that a normal, reduced, irreducible variety over~$\C$ with an effective action of an algebraic torus with a dense open orbit is called a \textit{toric variety} (e.g. see~\cite{cls-11} for overview of the theory).
	The underlying complex manifold of a compact nonsingular toric variety is a \textit{toric manifold}~\cite{da-ja-91} with respect to the compact form of the algebraic torus.
	Let~$X_{\Sigma}$ be any toric variety with the corresponding rational polyhedral fan~$\Sigma$ of dimension~$n$.
	By~\cite{fr-10}, there is the~$T$-diagram~$D_{\Sigma}$ such that~$X_{\Sigma}$ is~$T$-equivariantly homotopy equivalent to~$\hoc D_{\Sigma}$.
	Here~$\Sigma$ denotes (by slightly abusing the notation) the poset of cones in the fan~$\Sigma$ ordered by inclusion.
	
	Denote by~$H^{*}(X;\Q)$ the singular cohomology groups (over~$\Q$) for a topological space~$X$, and by~$H^{cw}_{i}(\Sigma;\mathcal{F})$ the cellular homology with coefficients in a cellular cosheaf~$\mathcal{F}$ (see~\S\ref{sec:shposet}).	
	Below we outline three main results of this note.
	
	\begin{theoremM}\label{thmm:1}
		For any rational polyhedral finite fan~$\Sigma$ 
		there is the isomorphism
		\[
		H^{i}(X_{\Sigma};\Q)\cong \bigoplus_{s+t=i} H^{cw}_{n-s}(\Sigma;H^{t}(D_{\Sigma})),\ i\geq 0.
		\]
	\end{theoremM}
	
	Theorem~\ref{thmm:1} (see Theorem~\ref{thm:toricmain} below) is known and was proved by different arguments in~\cite{da-78} (with~$\C$ coefficients) and~\cite{to-14}.
	(In~\cite{fr-06} the orbit spectral sequence collapse was obtained over any commutative unital ring for any nonsingular toric variety.)
	More generally, the spectral sequence induced by the coskeletal filtration for any toric diagram over any CW poset~$\Pp$ abuts at its second page (see Theorem~\ref{thm:orbss}).
	This filtration coincides with the orbit filtration for any \textit{strongly reduced}~$T$-diagram~$D$ (i.e.~$\dim D(p)=\rk T-\rk p$ holds for every~$p\in \Pp$), see~\S\ref{sec:tfac}.
	
	A~$T$-action on~$X$ is called \textit{Cohen-Macaulay}~\cite{go-to-10,fr-pu-11} if the respective Atiyah-Bredon-Franz-Puppe sequence is exact in all terms of degree strictly greater than the least dimension of a~$T$-orbit in~$X$.
	By definition~\cite{go-ko-mc-98}, a~$T$-space is \textit{equivariantly formal} over~$\Q$ if the cohomological Serre spectral sequence (with~$\Q$ coefficients) of the corresponding Borel fibration abuts at its second page.
	If there is at least one fixed point in a~$T$-space~$X$ then the action is Cohen-Macaulay iff $X$ is equivariantly formal (over~$\Q$).
	Given a~$T$-diagram~$D$, the Borel fibration of~$\hoc D$ takes form~$\hoc BS\to BT$, where the diagram~$S\colon \Pp\to\Top$ is given by the stabilizers for the~$T$-action on~$\hoc D$ (see Proposition~\ref{pr:borfib}), and~$B$ is the classifying functor.
	
	\begin{theoremM}\label{thmM:2}
		Let~$D$ be any strongly reduced~$T$-diagram over a CW poset~$\Pp$.
		Then the~$T$-action on~$\hoc D$ is Cohen-Macaulay iff the sheaf~$H^{*}(BS)$ is acyclic.
		In this case, the isomorphisms 
		\[
		H^*_{T}(\hoc D;\Q)\cong \lm H^*(BS),
		\]
		\[
		\Tor^{-i,2t}_{H^{*}(BT)}(H^*_{T}(\hoc D;\Q),\Q)\cong 
		\lm^{t-i} H^{t}(D),
		\]
		of~$H^{*}(BT)$- and~$\Q$-modules, respectively, hold for all~$i,t\geq 0$.
	\end{theoremM}
	
	Theorem~\ref{thmM:2} (see Theorem~\ref{thm:cmdiag} below) follows from MVSS abutment mentioned above.
	Another key tool of the proof is the~$t$-graded \textit{comparison spectral sequence} (CSS, for short) of a~$T$-diagram~$D$ (see Section~\ref{sec:compar}).
	In particular, in the conditions of Theorem~\ref{thmM:2}, equivariant formality is equivalent to odd cohomology vanishing for~$\hoc D$ and existence of fixed points for the~$T$-action (compare with~\cite{ma-pa-06}).
	We mention a recent result on equivariant formality of partial quotients, which is a particular case for homotopy colimits for  toric diagrams~\cite{ku-23}.
	
	For any CW poset~$\Pp$ the subposet of its elements of rank~$\leq q$ is called a~\emph{$q$-skeleton}~$\Pp_{\leq q}$ of~$\Pp$.
	The skeleton of a diagram~$D\colon\Pp\to\Top$ is defined by the restriction of~$D$ to~$\Pp_{\leq q}$.
	In particular, the~$q$-skeleton of~$D_{\Sigma}$ coincides with~$D_{\Sigma_{\leq q}}$, where~$\Sigma_{\leq q}$ is the union of all cones  of dimension~$\leq q$ in~$\Sigma$.
	In~\S\ref{sec:betti} we obtain the description of \textit{bigraded Betti numbers}~$\beta^{-i,2j}(D)$ of~$\hoc D$ (i.e. the dimensions of the second page terms for Eilenberg-Moore spectral sequence of the Borel fibration of~$\hoc D$, or EMSS for short) in terms of~$h$-numbers and reduced Euler characteristic~$\widetilde{\chi}$ as follows.
	
	\begin{theoremM}\label{thmM:3}
		Let~$D$ be a strongly reduced~$T$-diagram over a CW poset~$\Pp$.
		Suppose that~$\hoc D$ is equivariantly formal over~$\Q$.
		Then~$\rk T=\dim \Pp=d$ and one has $\beta^{0,2i}(D)=h_{d-i}(\Pp)$,
		\[
		\beta^{0,2j}(D_{\leq q})=
		\beta^{0,2j}(D),\ 
		\beta^{-i,2(i+q)}(D_{\leq q})=
		(-1)^{q+1}\sum_{p\in \sk^{q} \Pp} \binom{d-\rk p}{i+q}\widetilde{\chi}((\sk^{q} \Pp)_{> p}),\ 
		i\geq 0,\ j<q,
		\]
		and all other bigraded Betti numbers for~$\hoc D_{\leq q}$ vanish.
	\end{theoremM}
	
	As an application of Theorem~\ref{thmM:3}, we describe the ordinary and bigraded Betti numbers of skeletons for toric manifolds and nonsingular compact toric varieties~$X_{\Sigma}$.
	In the case of any skeleton for~$\C P^{m-1}$ we compare the obtained formulas from Theorem~\ref{thmM:3} with those implied by~\cite{fu-19,gr-th-07}.
	
	This note is organized as follows.
	The necessary (for the purposes of this work) notions of (co)sheaf theory on posets are given in Section~\ref{sec:shposet}.	
	In Section~\ref{sec:formality}, MVSS collapse for formal diagrams (based on natural formality over~$\Q$ of tori and their classifying spaces) is recalled after introducing some basic definitions.
	In Section~\ref{sec:tfac} we compare the orbit and coskeletal spectral sequences for strongly reduced~$T$-diagrams and prove Theorem~\ref{thmm:1}.
	In Section~\ref{sec:borel} we recall a diagrammatic description of the Borel space for~$\hoc D$ of a~$T$-diagram~$D$ and the respective EMSS.
	The comparison spectral sequence is introduced in Section~\ref{sec:compar}.
	Definition of equivariant formality, Cohen-Macaulayness for~$T$-spaces and the proof of Theorem~\ref{thmM:2} are given in Section~\ref{sec:eftoric}.
	The concluding Section~\ref{sec:betti} is devoted to computations of bigraded Betti numbers for equivariantly formal spaces and skeletons of toric diagrams, providing the proof of Theorem~\ref{thmM:3}.
	A detailed computation for (bigraded) Betti numbers for skeletons of a complex projective space concludes the note.
	
	{\emph{Acknowledgements.}}
	The author thanks Okayama University of Science (in particular, Shintaro Kuroki) for a comfortable work atmosphere and assistance during his research visit in 2023.
	The author expresses his gratitude to Matthias Franz for spotting an error in integral natural formality argument given in an early version of this note, and to Oliver Goertsches for explaining Cohen-Macaulayness for torus actions without fixed points.
	This work was partially supported by DFG Walter Benjamin Fellowship [Deutsche Forschungsgemeinschaft (DFG) - Projektnummer 561158824], the French national research agency [grant ANR-20-CE40-0016], and Young Russian Mathematics 2023 award.
	
	\section{(Co)homology of (co)sheaves on posets}\label{sec:shposet}
	
	In this section we recall the basic notions of cellular (co)sheaf theory over Alexandrov spaces by following~\cite{ay-17} (see~\cite{cu-14} for the detailed overview).
	Let~$\Mod_{R}$ be the category of finitely generated graded modules over a commutative associative ring~$R$.
	
	\begin{definition}[\cite{bj-84}]
		A poset~$\Pp=\Pp(X)$ is called a \textit{CW poset} if~$\Pp$ is the poset of cells~$\lb \sigma_{p}\rb_{p\in \Pp}$ in a regular CW complex~$X$.
		Denote by~$\hat{0}$ the least element of~$\Pp$ corresponding to the empty cell.
		The CW poset~$\Pp$ admits a unique (i.e. not depending on~$X$) rank function~$\rk\colon \Pp\to \Z$ defined by~$\rk c:=\dim \sigma_c+1 \geq 0$.
		Let~$\dim \Pp:=\dim X$ be the (well-defined) dimension of~$\Pp$.
	\end{definition}
	
	In this note, we consider only finite posets, unless explicitly stated otherwise.
	By abuse of the notation, we denote by~$\Pp$ the category, whose objects are all objects of the poset~$\Pp$ and arrows are the corresponding partial order relations.
	For any two elements~$p, q\in \Pp$ write~$p\leq_{i} q$ if~$p\leq q$ and and~$\rk p=\rk q-i$.
	Given any pair~$p\leq_{1} q$ in~$\Pp=\Pp´(X)$, denote by~$[q:p]:=[\sigma_{q}:\sigma_{p}]$ the corresponding index in the regular CW complex~$X$.
	(The values of such indices are~$-1,0,1$.)
	
	\begin{definition}[\cite{cu-14}]\label{def:cellcoh}
		A covariant (contravariant, respectively) functor~$\mc{A}\colon \Pp\to \Mod_{R}$, is called a \emph{cellular sheaf} (\emph{cellular cosheaf}, respectively) with values in~$\Mod_{R}$.
		The \emph{cellular cohomology}~$H^*_{cw}(\Pp;\mc{A})$ of a cellular sheaf~$\mc{A}$ on~$\Pp$ is by definition the cohomology~$R$-module of the chain complex~$(C^*_{cw}(\Pp,\mc{A}),d)$ (depending on~$\Pp$ and the index numbers of CW complex~$X$ corresponding to~$\Pp$) defined by
		\[
		C^i_{cw}(\Pp,\mc{A}):=
		\bigoplus_{p\in \Pp\colon \rk p=i} \mc{A}(p),\ 
		d(p\times y):=\sum_{p\leq_{1} p'}[p':p]\cdot p'\times \mc{A}(p\leq p')y.
		\]
		(Here, we identify finite products with coproducts in an abelian category.)
		The cellular homology~$H_*^{cw}(\Pp;\mc{B})$ of a cellular cosheaf~$\mc{B}$ on~$\Pp$ is the homology~$R$-module of the chain complex~$(C^{cw}_*(\Pp,\mc{B}),d)$,
		\[
		C_i^{cw}(\Pp,\mc{B}):=\bigoplus_{p\in \Pp\colon \rk p=i} \mc{B}(p),\ 
		d(p\times y):=\sum_{p'\leq_{1} p}[p:p'] \cdot p'\times \mc{B}(p'\leq p)y.
		\]
	\end{definition}
	
	One can see that the cellular (co)homology does not depend on the choice of~$X$ and indices.
	By a slight abuse of the notation we call a cellular (co)sheaf by a (co)sheaf in the sequel.
	
	\begin{remark}
		For any sheaf~$\mc{A}$ on any CW poset~$\Pp$ of dimension~$d$ one has
		\[
		H_{cw}^{s}(\Pp;\mc{A})=
		H^{cw}_{d-s}(\Pp^{op};\mc{A}),
		\]
		by simply considering~$\mc{A}$ as the cosheaf on~$\Pp^{op}$.
	\end{remark}
	
	\begin{example}
		For any diagram~$D\colon\Pp\to\Top$, the functor~$H^*(D(-);\Bbbk)$ is a cosheaf of graded~$\Bbbk$-algebras on~$\Pp$, or equivalently, a sheaf on~$\Pp^{op}$.
	\end{example}
	
	For any elements~$p<_{2} q$ of a CW poset~$\Pp$, the open interval~$(p,q)$ in~$\Pp$ consists precisely of two distinct elements~$\lb p',q'\rb$, and the identity
	\begin{equation}\label{eq:intid}
		[q:p'][p':p]+[q:q'][q':p]=0,
	\end{equation}
	holds~\cite{bj-84}.
	This implies that~$d^2=0$ holds for cellular cosheaf homology and sheaf cohomology.
	
	\begin{definition}
		For any poset~$\Pp$ the \textit{order complex}~$\ord \Pp$ of~$\Pp$ is the simplicial complex of all strictly monotone chains in~$\Pp$.
		Notice that~$\ord \Pp$ is a cellular poset with the index numbers
		\[
		[\sigma:\partial_{i}\sigma]=(-1)^{n-i},\ \sigma=(p_0>p_1 >\cdots >p_{n}),\ \partial_{i}\sigma:=(p_0>\cdots >\widehat{p_{i}}>\cdots >p_{n}),
		\]
		equal to those of the simplicial complex~$\ord \Pp$, where the hat means omitting the corresponding element in the chain.
	\end{definition}
	
	\begin{definition}[\cite{ay-17}]
		For any sheaf~$\mc{A}\colon \Pp\to \Mod_{R}$, the \textit{refinement sheaf}~$s\mathcal{A}\colon \ord \Pp\to \Mod_{R}$ is the sheaf given by the formula
		\[
		s\mathcal{A}(p_0 > p_1 >\cdots> p_{s}):=\mc{A}(p_{0}),
		\]
		\[
		s\mathcal{A}((p_0 > p_1 >\cdots> p_{s})\to (q_0 > q_1 >\cdots> q_{t}) ):=\mc{A}(p_0\to q_{0}),
		\]
		For a morphism of sheaves~$\alpha\colon \mc{A}\to \mc{B}$ (that is, a natural transformation of the corresponding functors) define the induced morphism~$s\alpha\colon s\mc{A}\to s\mc{B}$ of sheaves by the formula
		\[
		s\mc{A}(p_0 > p_1 >\dots > p_{s})=
		\mc{A}(p_{0})\overset{\alpha}{\to}\mc{B}(p_{0})=
		s\mc{B}(p_0 > p_1 >\dots > p_{s}).
		\]
	\end{definition}
	
	\begin{remark}
		The choice of this notation for refinement is explained as follows.
		The sheaf~$s \mc{A}$ may be regarded as the restriction of the simplicial replacement~$\srep_{\bullet}\mc{A}\colon\Delta^{op}\to\Mod_{R}$ of~$\mc{A}$ to the collection of nondegenerate simplices in the nerve~$N_{\bullet}(\Pp)$.
		The refinement cosheaf~$c\mc{B}$ of a~cosheaf~$\mathcal{B}$ on~$\ord \Pp$ is defined in a similar way to the refinement sheaf.
	\end{remark}
	
	\begin{definition}[\cite{cu-14}]\label{def:shdef}
		By definition, for a sheaf~$\mc{A}\colon\Cc\to\Mod_{R}$ over a small category~$\Cc$ the corresponding sheaf cohomology~$H^*(\Cc;\mc{A})$ are the cohomology~$R$-modules of the cochain complex~$C_*^{cw}(\ord \Cc;s\mc{A})$.
		The sheaf~$\mc{A}\colon \Cc\to \Mod_{R}$ is called \textit{acyclic} if~$H^{i}(\Cc;\mc{A})$ vanishes unless~$i=0$.
	\end{definition}
	
	The proof of the following proposition is well known (namely, it follows directly by the universal~$\delta$-functor theorem~\cite{we-97}).
	
	\begin{proposition}[\cite{cu-14}]\label{prop:limcoh}
		For any cellular sheaf~$\mc{A}$ on a CW poset~$\Pp$ with values in~$\Mod_{R}$, one has the isomorphism of~$R$-modules
		\[
		\lmm^{*} \mc{A} \cong 
		H^*(\Pp;\mc{A}).
		\]
	\end{proposition}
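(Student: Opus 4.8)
The plan is to invoke the universal $\delta$-functor theorem, realizing both sides as cohomological $\delta$-functors on the abelian category $[C,\cat_R]$ of sheaves that agree in degree zero. Recall that $\lm=\lmm^{0}$ is left exact and that $\lmm^{*}$ is, by definition, the sequence of its right derived functors; hence $(\lmm^{i})_{i\geq 0}$ is the \emph{universal} cohomological $\delta$-functor extending $\lm$. It therefore suffices to exhibit $(H^{i}(C;-))_{i\geq 0}$ as a $\delta$-functor with $H^{0}(C;-)\cong\lm$ and to check that it, too, is universal; the unique morphism of $\delta$-functors $\lmm^{*}\to H^{*}(C;-)$ extending the degree-zero isomorphism is then automatically an isomorphism.

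First I would verify the $\delta$-functor structure. By Definition~\ref{def:shdef}, $H^{*}(C;\mc{A})$ is the cohomology of the complex $C^{*}_{cw}(\ord C^{op};c\mc{A})=C_{*}^{cw}(\ord C;s\mc{A})$, whose term in each degree is a finite direct sum of evaluations $\mc{A}(c)$ indexed by the chains of prescribed length in $C$. Since each evaluation $\mc{A}\mapsto\mc{A}(c)$ and finite direct sums are exact on $[C,\cat_R]$, the assignment $\mc{A}\mapsto C^{*}_{cw}(\ord C^{op};c\mc{A})$ is an exact functor into cochain complexes of $R$-modules. Consequently a short exact sequence of sheaves yields a short exact sequence of complexes, and the associated long exact cohomology sequence supplies the connecting maps, so $(H^{i}(C;-))_{i\geq 0}$ is a cohomological $\delta$-functor. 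For the degree-zero identification, a $0$-cochain assigns to each $c\in C$ an element $x_{c}\in s\mc{A}(c)=\mc{A}(c)$, and a direct reading of $d^{0}$ shows that its kernel consists exactly of the families compatible under the structure maps $\mc{A}(c\leq c')$, i.e. of the points of the inverse limit; thus $H^{0}(C;\mc{A})\cong\lm\mc{A}$ naturally in $\mc{A}$.

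It remains to prove that $(H^{i}(C;-))_{i\geq 0}$ is universal, for which, by the universal $\delta$-functor theorem, it suffices to show that $H^{i}(C;-)$ is effaceable for $i>0$. I would do this by checking vanishing on an injective cogenerating family. The category $[C,\cat_R]$ has enough injectives, and a cogenerating family is furnished by the right Kan extensions $R_{c}M$ of injective $R$-modules $M$ along the inclusion $\{c\}\hookrightarrow C$; since $C$ is a poset, $R_{c}M$ is concretely the constant sheaf $M$ on the principal down-set $C_{\leq c}$ extended by zero. For such a sheaf the refinement formula $s\mc{A}(c_0 > \cdots > c_s)=\mc{A}(c_0)$ shows that only the chains lying entirely in $C_{\leq c}$ contribute, so the complex $C_{*}^{cw}(\ord C;s(R_{c}M))$ is identified with the (augmented) simplicial cochain complex of $\ord(C_{\leq c})$ with coefficients in $M$. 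As $C_{\leq c}$ has greatest element $c$, its order complex is a cone and hence contractible, forcing $H^{i}(C;R_{c}M)=0$ for all $i>0$. Because products are exact in $\cat_R$, this vanishing is inherited by products of such sheaves and by their retracts, and every injective is a retract of such a product; effaceability follows.

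The step I expect to be the main obstacle is this last one: pinning down the injective cogenerators of $[C,\cat_R]$ and matching $C_{*}^{cw}(\ord C;s(R_{c}M))$ with the reduced simplicial cochains of the cone $\ord(C_{\leq c})$, keeping careful track of the refinement conventions, the covering-relation signs of~\eqref{eq:intid}, and the passage to $C^{op}$. An alternative route that sidesteps injectives is to identify the order-complex complex directly with the normalized cosimplicial replacement (cobar) complex $\prod_{c_0\ot\cdots\ot c_i}\mc{A}(c_i)$ computing $\lmm^{*}$, using that restriction to nondegenerate chains is precisely the normalization computing the same cohomology; the remaining work then consists in verifying that the two differentials coincide up to the sign conventions of $\ord C$.
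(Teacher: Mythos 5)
Your proposal is correct and takes essentially the same approach as the paper: the paper's entire proof is the remark that the statement ``follows easily by the universal $\delta$-functor theorem'' (citing~\cite{cu-14}), and your argument is exactly the standard fleshing-out of that remark. The three ingredients you supply---termwise exactness of $\mc{A}\mapsto C^{*}_{cw}(\ord C^{op};c\mc{A})$ giving the $\delta$-functor structure, the identification $H^{0}(C;\mc{A})\cong\lm\mc{A}$, and effaceability via the injective sheaves $R_{c}M$ supported on down-sets $C_{\leq c}$ whose order complexes are cones---are precisely what the cited theorem requires, so no gap remains beyond the implicit (and shared with the paper) convention of computing derived limits in a module category large enough to have enough injectives.
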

	
	In the sequel, we consider cellular cosheaves, their cellular homology and respective sheaf cohomology.
	Namely, given a diagram~$D$ of topological spaces over a CW poset~$\Pp$, one has the cellular cosheaf~$H^*(D)$ on~$\Pp$, its cellular homology~$H_{*}^{cw}(\Pp;H^*(D))$ and sheaf cohomology 
	\[
	H^{*}(\Pp^{op};H^*(D))\cong \lmm^{*} H^*(D).
	\]
	In the expression~$\lm^{*} H^*(D)$ we consider the covariant functor~$H^*(D)$ defined on the category~$\Pp^{op}$.
	
	For the proof of the following theorem see~\cite[Thm.~2; \S~4.1]{ev-tu-15}.
	
	\begin{theorem}[{\cite{ev-tu-15}}]\label{thm:evtu}
		For any sheaf~$\mc{A}$ (cosheaf~$\mc{B}$, respectively) of~$R$-modules with free (as~$R$-modules) stalks over any CW poset~$\Pp$ one has isomorphisms of~$R$-modules
		\[
		H^{*}(\Pp;\mc{A})\cong
		H^{*}_{cw}(\Pp;\mc{A}),\ 
		H_{*}(\Pp;\mc{B})\cong
		H_{*}^{cw}(\Pp;\mc{B}).
		\]
	\end{theorem}
		
	\section{Spectral sequence of a diagram and its abutment}\label{sec:formality}
	
	The purpose of the current section is to recall the spectral sequence abutment for any toric diagram (Theorem~\ref{thm:rcollf}) and its classifying space (Theorem~\ref{thm:btoric}) for cohomology with~$\Q$ coefficients established in~\cite{so-to-26}.
	We refer to~\cite{go-ja-09} for the basic definitions from simplicial homotopy theory.

    \begin{definition}
		Let~$\Delta$,~$\Delta_{n}$ be the categories of all finite ordered sets, and of all ordered sets of cardinality~$\leq n$, respectively.
		Denote by~$\Top$ the category of compactly generated weakly Hausdorff topological spaces and continuous maps between them.
		For a simplicial space~$X_{\bullet}\in \Top^{\Delta^{op}}=\stp$ let
		\[
		\tr_{n} X_{\bullet}:=\iota^{n}_{*} X_{\bullet}\in \Top^{\Delta_{n}^{op}},
		\]
		where~$\iota^{n}\colon \Delta_{n}\to \Delta$ is the natural inclusion functor.
		Let
		\[
		\sk_{n}\colon \Top^{\Delta_{n}^{op}}\to \Top^{\Delta^{op}},
		\]
		be the left adjoint of~$\tr_{n}$ given by the respective left Kan extension.
		The functor
		\[
		\Sk_{n}:=\sk_{n}\circ\tr_{n}\colon \stp\to \stp,
		\]
		is called the \textit{$n$-skeleton} of~$X_{\bullet}$.
	\end{definition}
	
	Let~$\widetilde{X}_{\bullet}\to X_{\bullet}$ be a cofibrant replacement of a simplicial space.
	The counit of the adjunction~$sk_{n}\dashv \tr_{n}$ is the map~$\Sk_{n} \widetilde{X}_{\bullet}\to \widetilde{X}_{\bullet}$ which is a cofibration.
	The simplicial space~$X_{\bullet}$ has dimension~$n$ if~$X_{\bullet}=\Sk^{n} X_{\bullet}$.
	In what follows, we consider only simplicial spaces of finite dimension, whose every object is cofibrant.
	Using the Quillen model structure, we often replace a weak equivalence in~$\Top$ with a homotopy equivalence by Whitehead theorem for diagrams consisting of CW complexes.
	By taking the geometric realization of an~$n$-dimensional simplicial space, one obtains the filtration
	\begin{equation}\label{eq:filt}
		\lv\Sk_{0} \widetilde{X}_{\bullet}\rv\subset \cdots \subset \lv\Sk_{n} \widetilde{X}_{\bullet}\rv=\lv\widetilde{X}_{\bullet}\rv,
	\end{equation}
	where~$\lv X_{\bullet}\rv$ denotes the realization of a simplicial space.
	(Notice that~$\lv\widetilde{X}_{\bullet}\rv\to \lv X_{\bullet}\rv$ is a weak equivalence.)
	
	\begin{definition}
		For a diagram~$D\colon\Cc\to\Top$ over a small category~$\Cc$ the \textit{simplicial replacement}~$\srep_{\bullet} D\in \stp$ of~$D$ is given by the set of~$n$-simplices
		\[
		\biggl\lb\bigsqcup_{\sigma\in N_{n}(\Cc)} D(c_{n})|\ \sigma=(c_0\ot \cdots\ot c_{n})\in N_{n}(\Cc)\biggr\rb,
		\]
		where~$N_{\bullet}(\Cc)\in \sset$ is the nerve category of~$\Cc$.
		The face and degeneracy maps are given by the standard formula, which can be found e.g. in~\cite{go-ja-09}.
	\end{definition}
	
	\begin{definition}[\cite{bo-ka-72, bo-87}]\label{def:bkss}
		The cohomological spectral sequence of a simplicial space~$X_{\bullet}\in \stp$ is the first quadrant spectral sequence~$((E_{X_{\bullet}})_{r}^{p,q},d_r)$,~$r\geq 1$, in cohomology with coefficients in a PID~$\Bbbk$, corresponding to the filtration \eqref{eq:filt}.
		The spectral sequence~$((E_{D})_{r}^{p,q},d_r)$ (in singular cohomology~$H^{*}(-;\Bbbk)$) of a diagram~$D\colon\Cc\to\Top$ over~$\Cc$ with values in~$\Top$ is the respective spectral sequence of the simplicial replacement~$\srep_{\bullet} D$~\cite{go-ja-09}.
	\end{definition}
	
	\begin{definition}
		The realization~$\lv\srep_{\bullet} D\rv$ of the simplicial replacement for a~$\Cc$-diagram~$D$ of spaces is called the \textit{homotopy colimit}~$\hoc D$ of~$D$.
	\end{definition}
		
	\begin{remark}\label{rem:bkss}
		The simplicial space~$\srep D_{\bullet}$ is cofibrant provided that~$D(c)$ is a cofibrant space for every~$c\in\Cc$~\cite{go-ja-09}.
		It is well known that the spectral sequence second page groups for~$D$ are right derived limits of the respective presheaf~$H^{*}(D)$ (e.g. see~\cite{no-ra-05}).
		Therefore, the spectral sequence for a diagram~$D\colon \Cc\to\Top$ takes the following form:
		\[
		(E_{D})_{2}^{i,j}={\lm}^{i} H^j(D)\Rightarrow H^{i+j}(\hoc D).
		\]
	\end{remark}
	
	\begin{definition}[\cite{we-zi-zi-99}]
		A diagram~$D\colon \Cc\to\Top$ consisting of compact tori~$D(c)=(S^{1})^{d(c)}$ (as objects) and group homomorphisms (as arrows) is called a \emph{toric diagram}, where~$d(c)\in\Z_{\geq 0}$,~$c\in \Cc$.
		More generally, a \textit{quasitoric diagram} has quasitori (i.e. product of a finite abelian group with a compact torus) as objects, and group homomorphisms as its arrows.
	\end{definition}
	
	\begin{theorem}[{\cite[Theorem~2]{so-to-26}}]\label{thm:rcollf}
		For any toric diagram~$D$ the cohomological spectral sequence $((E_{D})_{*}^{*,*},d_{*})$ over~$\Q$ abuts at the second page.
		One has
		\[
		H^{n}(\hoc D;\Q)\cong \bigoplus_{i+j=n} \lmm^{i} H^{j}(D;\Q),\ n\geq 0.
		\]
	\end{theorem}
	
	Let~$\alpha\colon D\to \kappa T$ be a morphism of quasitoric diagrams, where~$\kappa T$ denotes the constant diagram.
	
	\begin{theorem}[{\cite[Theorem~2]{so-to-26}}]\label{thm:btoric}
		For any toric diagram~$D$ the cohomological spectral sequence $((E_{BD})_{*}^{*,*},d_{*})$ of~$H^*(BT;\Q)$-modules abuts at the second page.
		One has a $\Q$-vector space isomorphism
		\[
		H^{n}(\hoc BD;\Q)\cong \bigoplus_{i+j=n} \lmm^{i} H^{j}(BD;\Q),\ n\geq 0.
		\]
	\end{theorem}
	
	From now on we assume that~$\Cc=\Pp$ is a finite graded poset.
	The above spectral sequence for a diagram~$D\colon \Pp\to\Top$ is isomorphic to Mayer-Vietoris spectral sequence of the open cover induced by open stars in~$\Pp$.
	In this case, we call the spectral sequence from Definition~\ref{def:bkss} the Mayer-Vietoris spectral sequence (MVSS, for short), e.g. see~\cite[\S5]{so-to-26}.
	
	\section{Torus actions arising from~$T$-diagrams}\label{sec:tfac}
	
	In this section, we recall the definition of a~$T$-diagram and some related basic notions for the respective~$T$-space~$\hoc D$.
	We derive the orbit spectral sequence collapse over~$\Q$ (Theorem~\ref{thm:orbss}) which is well known for toric varieties~\cite{da-78},~\cite{to-14} by a different argument (Theorem~\ref{thm:toricmain}).
	
	\subsection{Faces and orbits}\label{ssec:face}
	
	We recall the definitions of the orbit filtration and faces for a~$T$-space below in the following setting.
	Let~$X$ be a finite regular~$T$-CW complex.
	Then~$X$ is a Hausdorff compact~$T$-space.
	This implies that the orbit space~$Q=X/T$ is a regular CW complex, and that there are only finitely many distinct orbit types in~$X$.
	
	\begin{definition}
		For an effective~$T$-action on a topological space~$X$ consider the \textit{orbit type filtration}~\cite{br-72}
		\begin{equation}\label{eq:otf}
			\varnothing=X_{-1}\subset X_{0}\subset \cdots\subset X_{k}=X,
		\end{equation}
		where~$k$ is the rank of~$T=(S^1)^k$.
		It induces the filtration of the orbit space~$Q=X/T$
		\[
		\varnothing=Q_{-1}\subset Q_{0}\subset \cdots\subset Q_{k}=Q,
		\]
		by putting~$Q_{i}:=\pi(X_{i})$, where~$\pi\colon X\to Q$ is the quotient map.
		Any connected component of~$Q_{i}\setminus Q_{i-1}$ is called an \textit{open face} of~$X$.
		Assume that the indexing set~$\Pp=\Pp(X)$ for the collection~$\lb Q^{\circ}_{p}\rb_{p\in \Pp}$ of all open faces in~$X$ is finite.
		The partial order on~$\Pp^{op}$ is defined by~$p\leq q$ iff~$Q_{p}\subseteq Q_{q}$, where~$Q_{p}:=\overline{Q^{\circ}_{p}}$ and the bar denotes the closure in~$Q$.
		The set~$Q_{p}$ is called a \textit{closed face} of the~$T$-space~$X$.
		The poset~$\Pp^{op}$ is called the \textit{orbit poset} of the~$T$-space~$X$.
		The (co)homological spectral sequence of the filtration~\ref{eq:otf} is called the \textit{orbit spectral sequence} of the~$T$-action on~$X$.
	\end{definition}
	
	\begin{remark}
		Notice that~$Q$ is a face of~$X$ iff the respective orbit poset~$\Pp^{op}$ has the greatest element.
		The orbit poset of any toric variety of complex dimension~$n$ has the greatest element corresponding to zero of the fan (because there is a dense~$(\C^{\times})^n$-orbit, and the corresponding fan has the least element).
		For any smooth compact~$T$-manifold such a condition is also satisfied by the principal orbit theorem~\cite[Theorem 3.1]{br-72}.
	\end{remark}
	
	\begin{definition}\label{def:tdiag}
		We call a morphism of diagrams~$\alpha\colon \kappa T\to D$ (i.e. a natural transformation of functors) over a finite poset~$\Pp$ a \textit{$T$-diagram}, if any arrow in~$\alpha$ is an epimorphism of groups.
	\end{definition}
	
	Any~$T$-diagram~$D\colon\Pp\to\Top$ is a diagram with values in the category of~$T$-spaces.
	It induces an effective~$T$-action on the topological space~$\hoc D$.
	The~$T$-action is transitive on each object of~$D$.
	Therefore, the stabilizers of the~$T$-action on~$D(p)$ depend only on~$p\in \Pp$, which we denote by~$S_{p}\subseteq T$.
	The collection~$\lb S_{p}|\ p\in \Pp\rb$ with the natural embeddings of the isotropy subgroups gives a well-defined quasitoric diagram~$S=S(D)$ over~$\Pp$.
    (With the above definition of $\Pp$, $S_{p}\subseteq S_{q}$ for $p\leq q$, i.e. $Q_{p}\supseteq Q_{q}$.)
    
	\begin{definition}
		We call~$S\colon \Pp\to \Top$ the \textit{stabilizer diagram} of a~$T$-diagram~$D$.
	\end{definition}
	
	\begin{remark}
		It follows directly from the definition of a~$T$-diagram~$D$ that~$S_{p}\subseteq S_{q}$ holds for any~$p\leq q\in \Pp$, where~$S$ is the stabilizer diagram of~$D$.
		In particular, a closure of any orbit stratum in~$\hoc D$ is a union of orbit types of non-increasing dimension.
	\end{remark}
	
	By the orbit-stabilizer correspondence, there is a short objectwise-exact sequence of diagrams over~$\Pp$ with values in groups
	\begin{equation}\label{eq:stabses}
		\begin{tikzcd}
			\kappa 1\arrow{r} & S \arrow{r} & \kappa T \arrow{r} & D \arrow{r} & \kappa 1.
		\end{tikzcd}
	\end{equation}
	
	\begin{definition}\label{def:red}
		We call a~$T$-diagram~$D$ over~$\Pp$ a \textit{reduced diagram} if the inequality~$S_p\neq S_q$ holds for any~$p<q\in \Pp$.
		We call a~$T$-diagram~$D$ a \textit{strongly reduced diagram} if one has~$\dim S_p= \rk p$ for any~$p\in \Pp$.
	\end{definition}
	
	\begin{remark}
		Any strongly reduced~$T$-diagram~$D$ satisfies~$\dim D(p)=\rk T-\rk p$ for any~$p\in \Pp$.
		In particular,~$\dim \Pp=\dim T-b$, where~$b$ is the lowest dimension of a~$T$-orbit in~$\hoc D$.
	\end{remark}
	
	\begin{definition}[\cite{ay-17}]
		Let~$D\colon\Pp\to\Top$ be any diagram over a CW poset~$\Pp$ of dimension~$d$.
		Define the \textit{coskeletal filtration} on~$\Pp$
		\[
		\varnothing=\Pp_{\geq d+1}\subset \Pp_{\geq d}\subset \cdots \subset \Pp_{\geq 0}=\Pp,\ \Pp_{\geq i}:=\lb p\in \Pp|\ \rk p\geq i\rb,
		\]
		where~$\Pp_{\geq i}$ is the \textit{$i$-coskeleton} subposet in~$\Pp$.
		It induces the \textit{coskeletal filtration} on~$\hoc D$
		\begin{equation}\label{eq:coskhoc}
			\varnothing=\hoc \tau^{*}_{d+1}D \subset \hoc \tau^{*}_{d}D\subset \cdots \subset\hoc D,
		\end{equation}
		where~$\tau_{i}\colon \Pp_{\geq i}\to \Pp$ is the natural embedding functor.
	\end{definition}
	
	The proof of the following proposition is straightforward.
	
	\begin{proposition}\label{pr:comp}
		Let~$D$ be a~$T$-diagram over a CW poset~$\Pp$.
		Then~$\Pp^{op}$ is isomorphic to the orbit poset of~$\hoc D$ iff~$D$ is reduced.
		If~$D$ is strongly reduced then the coskeletal and orbit filtrations on~$\hoc D$ coincide.
	\end{proposition}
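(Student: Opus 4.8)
The plan is to realise both filtrations explicitly on the orbit space and to read off the face structure from the stabilizer diagram $S$. First I would identify the orbit space: since $T$ acts on each object $D(c)=T/S_c$ by translation and every structure map $D(c\le c')$ is $T$-equivariant, the projection $\pi\colon\hoc D\to Q:=\hoc D/T$ collapses each torus factor to a point and yields $Q=\hoc(\kappa\,\pt)=|\ord C|$, the realisation of the order complex. For $c\in C$ let $W_c\subset Q$ be the union of the open simplices of $|\ord C|$ whose minimal vertex is $c$; by \eqref{eq:hocform} the stabilizer along $\pi^{-1}(W_c)$ is exactly $S_c$. Because $c$ is the minimum of $C_{\ge c}$, the subcomplex $|\ord C_{\ge c}|$ is the cone with apex $c$ over $|\ord C_{>c}|$, so $W_c$ is its open cone; in particular $W_c$ is contractible (hence connected) and $\overline{W_c}=|\ord C_{\ge c}|=:F_c$. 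The single most useful observation is that the gluing map $D(c\le c')\colon T/S_c\to T/S_{c'}$ is a homeomorphism iff $S_c=S_{c'}$, and is a proper torus quotient (so the orbit dimension strictly drops) iff $S_c\subsetneq S_{c'}$.

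For the first assertion I would compare the stratification $\{W_c\}$ with the orbit-type stratification. If $D$ is reduced then $S_d\ne S_c$ for every $d>c$, so $\overline{W_c}\setminus W_c=\bigsqcup_{d>c}W_d$ meets only strata of stabilizer $\ne S_c$; hence $W_c$ is both open and closed in the locus of orbit type $S_c$, i.e. a single connected component, i.e. a single open face. Thus $c\mapsto W_c$ is a bijection of $C$ onto the open faces, and since $F_c\subseteq F_{c'}$ iff $c\ge c'$, this correspondence is the asserted isomorphism between $C$ and the orbit poset (ordered by inclusion of closed faces). Conversely, if $D$ is not reduced, pick $c<d$ with $S_c=S_d$; monotonicity of $S$ forces $S_c=S_{c'}$ for every $c\le c'\le d$, so there is a covering relation $c\lessdot c_1$ with $S_c=S_{c_1}$. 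Along the wall where the $c$-coordinate degenerates the stabilizer stays $S_c=S_{c_1}$ (the gluing map is a homeomorphism), so $W_c$ and $W_{c_1}$ lie in one connected orbit-type component and determine the same open face; hence $c\mapsto{}$face is not injective and the orbit poset is a proper quotient of $C$. Incomparable elements with equal stabilizer cause no collapse, since their closures $F_c,F_{c'}$ are distinct subcomplexes not joined across a single wall.

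Now assume $D$ strongly reduced, so $\dim S_c=\rk c$ and the orbit through $W_c$ has dimension $k-\rk c$. I would show the two filtrations agree term by term under the reindexing $i+j=k$. On one hand, the $i$-coskeleton piece $\hoc\tau^{*}_{i}D$ is, by definition, the image of the equivariant map induced by $C_{\ge i}\hookrightarrow C$; a point lies in it iff it admits a representative chain in $C_{\ge i}$, and since the dimension of a stabilizer is a $T$-homeomorphism invariant, this happens iff $\dim S_c\ge i$, i.e. $\rk c\ge i$. Hence $\hoc\tau^{*}_{i}D=\pi^{-1}\bigl(\bigsqcup_{\rk c\ge i}W_c\bigr)=\pi^{-1}(|\ord C_{\ge i}|)$. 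On the other hand the orbit-type piece $X_j$ is the union of orbits of dimension $\le j$, so $X_j=\pi^{-1}\bigl(\bigsqcup_{k-\rk c\le j}W_c\bigr)=\pi^{-1}\bigl(\bigsqcup_{\rk c\ge k-j}W_c\bigr)$. Setting $i=k-j$ gives $X_j=\hoc\tau^{*}_{k-j}D$, so the orbit and coskeletal filtrations coincide. Strong reducedness enters precisely here: only the equality $\dim S_c=\rk c$ matches the coskeletal index $\rk c$ with the orbit-dimension index $k-\dim S_c$, and reducedness alone would not suffice.

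The routine parts are the identification $Q=|\ord C|$ and the cone description of $F_c$. The main obstacle is the converse half of the first assertion: one must argue rigorously that equal stabilizers across a covering relation genuinely fuse $W_c$ and $W_{c_1}$ into one connected component of a single orbit type, rather than merely making their closures meet. I would settle this with the local model near the degenerating wall: collapsing the $c$-coordinate in a simplex $(c<c_1<\cdots)$ applies $D(c\le c_1)$, which is a homeomorphism when $S_c=S_{c_1}$, so a whole neighbourhood of the wall consists of orbits of type $S_c$, exhibiting an explicit path from $W_c$ to $W_{c_1}$ inside one orbit-type stratum. One should also confirm, as indicated above, that distinct incomparable elements with coincident stabilizer do not collapse, so that reducedness is exactly the right hypothesis.
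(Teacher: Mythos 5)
Much of your argument is correct and is surely the intended ``straightforward'' one: the identification $Q=\lvert\ord C\rvert$, the strata $W_c$ carrying stabilizer exactly $S_c$, the closure relation $\overline{W_c}=\bigsqcup_{d\ge c}W_d$, and the whole treatment of the second assertion (where strong reducedness matches the coskeletal index $\rk c$ with the orbit-dimension index $k-\rk c$) are all fine. The genuine error is the second half of your ``single most useful observation'': a proper inclusion $S_c\subsetneq S_{c'}$ does \emph{not} force the orbit dimension to drop. The stabilizers of a $T$-diagram are quasitori and may be disconnected (the paper records that $S=S(D)$ is a quasitoric diagram), so the inclusion can have finite index. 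For instance, take $T=S^1$, $C=\lb c<c'\rb$, $\alpha_{c}=\Id$ and $\alpha_{c'}\colon T\to T/(\Z/2)$: then $D$ is reduced, $D(c\le c')$ is the double cover $z\mapsto z^2$, and both orbits are one-dimensional.

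This is not a cosmetic slip, because that observation is exactly what your proof of the first assertion needs. The paper defines open faces as connected components of $Q_i\setminus Q_{i-1}$ for the orbit type filtration \eqref{eq:otf}, which is indexed by \emph{orbit dimension}, whereas your open-and-closed argument takes place inside the locus of constant \emph{stabilizer} (``the locus of orbit type $S_c$''). The two stratifications have the same components only when comparable elements never carry distinct stabilizers of equal dimension --- which is precisely what the false observation would guarantee, and what reducedness alone does not. In the double-cover example above all orbits are one-dimensional, so $Q_1\setminus Q_0=Q$ is connected and the dimension-based definition yields a single open face: $W_c$ and $W_{c'}$ fuse even though $S_c\ne S_{c'}$. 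Hence the step ``open and closed in the locus of orbit type $S_c$, i.e.\ a single open face'' does not follow as written. To close the gap one must either read the orbit filtration as the stratification by stabilizer subgroups (with that reading your argument goes through; note that openness of $W_c$ in that locus also uses $S_d\ne S_c$ for $d<c$, not only the frontier condition for $d>c$ that you state), or strengthen ``reduced'' to ``$\dim S_c<\dim S_d$ for all $c<d$''. The latter holds automatically for strongly reduced diagrams, which is why your proof of the second assertion, as well as your converse direction (non-reduced implies strictly fewer faces than $\lvert C\rvert$), remain valid under either reading.
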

	
	\subsection{Orbit spectral sequence abutment}
	Let~$X=\hoc D$ for a~$T$-diagram~$D$ over a CW poset~$\Pp$.
	Consider the spectral sequence~$((C_{D})_{*,*}^{*},d^{*})$ of the filtration \eqref{eq:coskhoc} for singular cohomology~$H^{*}(-;\Q)$ with rational coefficients.
	The following lemma is proved in a similar way to the proof of~\cite[Theorem 2.5.3]{jo-98} (notice that instead of excision for compactly supported cohomology one can identify the quotients of the respective skeletons with the wedge of iterated suspensions).
	
	\begin{lemma}[\cite{jo-98}]\label{lm:orbf}
		For any toric diagram~$D\colon \Pp\to\Top$ over any CW poset $\Pp$ of~$\dim \Pp=d$ the spectral sequence of the coskeletal filtration on~$\hoc D$ is given as follows:
		\[
		C^{2}_{s,t}=H^{cw}_{d-s}(\Pp;H^{t}(D)) \Rightarrow H^{s+t}(\hoc D;\Q),
		\]
	\end{lemma}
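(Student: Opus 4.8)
The plan is to compute the spectral sequence of the coskeletal filtration \eqref{eq:coskhoc} directly, by localizing each filtration quotient over the open pseudocells and using that, for a \emph{toric} diagram, the relevant torus bundles are trivial. Since $C$ is finite and each $D(c)$ is a compact torus, $\hoc D$ is compact, so compactly supported and ordinary cohomology agree and the abutment is $H^{s+t}(\hoc D;\Q)$ as stated. Throughout I would work with $H^{*}_{c}(-;\Q)$ precisely so that the noncompact open strata contribute in a single degree.

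First I would analyze the strata of \eqref{eq:coskhoc}. Passing from $\hoc\tau^{*}_{i+1}D$ to $\hoc\tau^{*}_{i}D$ adjoins the part of $\hoc D$ lying over the open pseudocells $G^{\circ}_{c}$ with $\dim c=i$; recall $\dim G_{c}=d-\rk c$, so $G^{\circ}_{c}$ is an open $(d-i)$-cell. By the explicit model \eqref{eq:hocform}, over each $G^{\circ}_{c}$ the space $\hoc D$ is a $D(c)$-bundle, and here the toric hypothesis is essential: the structure maps of $D$ are group homomorphisms of tori, so this bundle is trivial, $G^{\circ}_{c}\times D(c)$.

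Next comes the first page. By excision together with the product structure,
\[
C^{1}_{s,t}\cong\bigoplus_{\dim c=i} H^{s+t}_{c}\bigl(G^{\circ}_{c}\times D(c)\bigr),
\]
and the Künneth formula with $H^{*}_{c}(\R^{d-i})\cong\Q$ concentrated in degree $d-i$ shows that each summand is $H^{t}(D(c))$ sitting in total degree $(d-i)+t$. Setting $s=d-i$ gives
\[
C^{1}_{s,t}\cong\bigoplus_{\rk c=d-s} H^{t}(D(c))=C^{cw}_{d-s}\bigl(C;H^{t}(D)\bigr),
\]
i.e. the degree-$(d-s)$ term of the cellular cosheaf chain complex of $H^{t}(D)$ from Definition~\ref{def:cellcoh}.

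It remains to identify the differential $d_{1}$ with the cosheaf boundary. The map $d_{1}$ is the connecting homomorphism of the filtration triple; tracing it through the product splittings above, the gluing of adjacent pseudocells $G_{c}$ and $G_{c'}$ with $c'\le_{1}c$ contributes the incidence number $[c:c']$ of the regular CW structure underlying $C^{op}$ (governed by \eqref{eq:intid}), while the torus factors contribute the cosheaf restriction $H^{t}(D)(c'\le c)$. Thus $d_{1}$ agrees with $d(c\times y)=\sum_{c'\le_{1}c}[c:c']\,c'\times H^{t}(D)(c'\le c)y$, and taking homology yields $C^{2}_{s,t}=H^{cw}_{d-s}(C;H^{t}(D))$. \textbf{The main obstacle} is exactly this last identification together with the triviality claim: verifying carefully that each bundle over $G^{\circ}_{c}$ is a genuine product (which is where ``toric'' is used) and that the connecting maps reproduce both the signs $[c:c']$ and the cosheaf maps without error. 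This is the technical heart of the argument, and it is where the proof follows the template of~\cite[Theorem 2.5.3]{jo-98}; granting it, the convergence statement and the identification of the second page are then formal.
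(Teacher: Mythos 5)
Your proposal is correct and takes essentially the same route as the paper, which itself gives no details beyond noting that the lemma ``is proved in a similar way to the proof of''~\cite[Theorem 2.5.3]{jo-98}: namely, the spectral sequence of the coskeletal filtration in compactly supported cohomology, with strata identified as products $G^{\circ}_{c}\times D(c)$ over the open cells of the regular CW complex underlying~$C^{op}$, the first page computed by K\"unneth, and~$d_{1}$ matched with the cellular cosheaf differential (the technical step both you and the paper delegate to~\cite{jo-98}). One small correction: the product structure over~$G^{\circ}_{c}$ holds for \emph{any} diagram, since all transition maps between simplices inside a single open pseudocell are the identity~$D(c)\to D(c)$ in the homotopy colimit model \eqref{eq:hocform}; the toric hypothesis is not what trivializes this bundle, it only guarantees compact fibers (so that~$\hoc D$ is compact and the abutment is ordinary cohomology) and finitely generated stalks for~$H^{t}(D)$.
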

	
	\begin{theorem}\label{thm:orbss}
		For any toric diagram~$D\colon \Pp\to\Top$ over any CW poset $\Pp$ of~$\dim \Pp=d$ the spectral sequence~$((C_{D})_{*,*}^{*},d^{*})$ (over~$\Q$) of the coskeletal filtration abuts at its page $2$: 
		\[
		H^{k}(\hoc D;\Q)\cong \bigoplus_{s+t=k} H^{cw}_{d-s}(\Pp;H^{t}(D)).
		\]
	\end{theorem}
	\begin{proof}
		By Theorem~\ref{thm:rcollf} and Proposition~\ref{prop:limcoh},~$H^{k}(\hoc D)$ is a direct sum of the corresponding cohomology groups of the sheaf~$H^{*}(D)$ on~$\Pp^{op}$.
		By Theorem~\ref{thm:evtu}, these are isomorphic to the cellular homology of~$H^{*}(D)$.
		The proof is complete.
	\end{proof}
	
	We conclude this section by deducing the orbit spectral sequence collapse for any toric variety.
	Let~$X_{\Sigma}$ be any toric variety of complex dimension $n$ with the corresponding rational polyhedral fan~$\Sigma$ in~$N_{\Q}=N\otimes\Q$,~$N=\Hm(S^1,T^n)$.
	(Here it is required that any cone~$\sigma\in\Sigma$ is \textit{salient}, i.e. contains no lines from~$N_{\Q}$; we also assume that~$\Sigma$ consists of only finitely many cones.)
	The variety~$X_{\Sigma}$ is equipped with the complex-analytical topology.
	Consider the diagram~$D_{\Sigma}\colon\Sigma\to\Top$, where~$D_{\Sigma}(\sigma)$ is the compact form of the algebraic torus corresponding to the orbit~$X_{\sigma}$.
	The arrow~$D_{\Sigma}(\sigma\leq \tau)$ is defined by taking quotient by a larger isotropy subgroup,~$\sigma\subseteq\tau\in\Sigma$.
	The~$T$-equivariant homeomorphism (homotopy equivalence, respectively) of the underlying topological space for compact (any, respectively)~$X_{\Sigma}$ with~$\hoc D_{\Sigma}$ is proved in~\cite{fr-10}.
	
	By a slight abuse of the notation we denote the poset of cones in~$\Sigma$ ordered by inclusion relation by~$\Sigma$.
	This poset is the face poset of the regular CW complex of cones in the polyhedron~$|\Sigma|$.
	Since~$\Sigma_{<\sigma}$ is a sphere for any~$\sigma\in\Sigma$,~$\Sigma$ is a CW poset.
	One can deduce from the salient property of the cones that~$D_{\Sigma}$ is a strongly reduced~$T$-diagram.
	The next theorem is a direct corollary from Theorem~\ref{thm:orbss} and Proposition~\ref{pr:comp}.
	
	\begin{theorem}[\cite{da-78,to-14}]\label{thm:toricmain}
		For any rational polyhedral finite fan~$\Sigma$ of dimension $n$ one has
		\[
		H^{i}(X_{\Sigma};\Q)\cong \bigoplus_{s+t=i} H^{cw}_{n-s}(\Sigma;H^{t}(D_{\Sigma})).
		\]
	\end{theorem}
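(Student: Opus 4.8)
The plan is to deduce the statement directly from Theorem~\ref{thm:orbss} applied to the toric diagram $D_{\Sigma}$ over the poset $\Sigma$, after checking that all hypotheses of that theorem are met and that compact-support cohomology agrees with ordinary cohomology in the present situation. First I would invoke the identification, recalled in the discussion preceding the theorem, that $X_{\Sigma}$ is $T$-equivariantly homeomorphic to $\hoc D_{\Sigma}$ by reduction to a quotient construction over $\ord\Sigma$, following~\cite{fr-10}. It therefore suffices to compute $H^{*}(\hoc D_{\Sigma};\Q)$ and to express it through the cosheaf $H^{*}(D_{\Sigma})$ on $\Sigma$.

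Next I would verify the hypotheses of Theorem~\ref{thm:orbss}. Each object $D_{\Sigma}(\sigma)$ is the compact form of the algebraic torus of the orbit $X_{\sigma}$ and the arrows are group epimorphisms, so $D_{\Sigma}$ is a toric $T$-diagram in the sense of Definition~\ref{def:tdiag}. That $\Sigma$, the poset of cones ordered by inclusion, is a homology manifold of dimension $d$ was noted above: it is the cone over a star-shaped $(d-1)$-sphere, and completeness of $\Sigma$ makes $\Sigma_{>\sigma}$ a homology sphere for every cone, so $\Sigma^{op}$ is a CW poset and the orientability condition over $\Q$ holds. I would also record that $D_{\Sigma}$ is strongly reduced: by the orbit-cone correspondence a cone $\sigma$ of dimension $r=\rk\sigma$ has orbit $X_{\sigma}$ of complex dimension $d-r$, whence $D_{\Sigma}(\sigma)=(S^1)^{d-r}$ and $\dim \Stab_{x} D_{\Sigma}(\sigma)=d-(d-r)=r=\rk\sigma$. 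By Proposition~\ref{pr:comp} this identifies $\Sigma^{op}$ with the orbit poset of $X_{\Sigma}$ and makes the coskeletal and orbit filtrations coincide, so the spectral sequence of Theorem~\ref{thm:orbss} is genuinely the orbit spectral sequence.

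The remaining point is to pass from compact-support cohomology, in which Theorem~\ref{thm:orbss} is phrased, to ordinary cohomology. Since $\Sigma$ is finite and each $D_{\Sigma}(\sigma)$ is a compact torus, the model \eqref{eq:hocform} exhibits $\hoc D_{\Sigma}$ as a finite CW complex glued from the compact pieces $\Delta_{\sigma}\times D_{\Sigma}(\sigma)$; equivalently, completeness of $\Sigma$ forces $X_{\Sigma}$ to be compact. Hence $H^{*}_{c}(\hoc D_{\Sigma};\Q)\cong H^{*}(\hoc D_{\Sigma};\Q)\cong H^{*}(X_{\Sigma};\Q)$, and Theorem~\ref{thm:orbss} yields at once both the collapse at the second page and the claimed isomorphism $H^{k}(X_{\Sigma};\Q)\cong\bigoplus_{s+t=k} H^{cw}_{d-s}(\Sigma;H^{t}(D_{\Sigma}))$.

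I do not anticipate a serious obstacle, as the result is a formal corollary of the machinery already in place; the only steps requiring attention are the dimension count establishing strong reducedness, which rests on the orbit-cone correspondence, and the confirmation that $\Sigma$ meets the homology-manifold and $\Q$-orientability hypotheses, both of which amount to bookkeeping given the preceding results on complete fans.
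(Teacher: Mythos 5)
Your proposal is correct and takes essentially the same route as the paper, which deduces Theorem~\ref{thm:toricmain} as a direct corollary of Theorem~\ref{thm:orbss} and Proposition~\ref{pr:comp}, relying on precisely the facts you verify: the identification $X_{\Sigma}\cong\hoc D_{\Sigma}$ from~\cite{fr-10}, strong reducedness of $D_{\Sigma}$, and the homology-manifold property of $\Sigma$. Your extra details (the orbit--cone dimension count and the passage from compact-support to ordinary cohomology via compactness) simply make explicit what the paper leaves as assertions.
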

	
	\begin{remark}
		It was conjectured in~\cite{fr-06} that the orbit spectral sequence for any toric variety collapses on its second page with trivial additive abutment for cohomology in~$\Z$.
		(For the proofs of collapse over~$\C$ and~$\Q$ see~\cite{da-78,to-14}, respectively.)
		The proof of Theorem~\ref{thm:orbss} in fact implies for the compact toric variety with any rational polyhedral fan that trivial additive abutments at the second pages for the respective orbit and Mayer-Vietoris spectral sequences (with any coefficients) are equivalent.
        Notice that the homotopy equivalence~\cite{fr-10} $X_{\Sigma}\simeq \hoc D_{\Sigma}$ is not proper in general, and the compactly supported cohomology is not invariant with respect to it.
        Therefore, the compactly supported cohomology for~$X_{\Sigma}$ do not reduce to the cohomology of~$\hoc D_{\Sigma}$ if~$X_{\Sigma}$ is not compact.
	\end{remark}
		
	\section{Borel space of a homotopy colimit and EMSS}\label{sec:borel}
	
	In this section we describe the homotopy decomposition for the Borel fibration of~$\hoc D$ for any~$T$-diagram~$D$.
	Then we recall the definition of the Eilenberg-Moore spectral sequence in the case of the Borel fibration.
	These facts are well known (for instance, see~\cite[\S 4.1]{df-04}).
	
	Let~$\Pp$ be any connected (i.e.~$\lv\ord \Pp\rv$ is connected) finite poset.
	By \eqref{eq:stabses}, for any~$T$-diagram~$D\colon\Pp\to\Top$ there is the respective stabilizer diagram~$S$.
	For any $T$-space $X$ denote by $B_{T} X:=X\times_{T} ET$ the respective Borel space.
	By naturality of the Borel construction, for any~$T$-diagram~$D$ there is the well-defined diagram~$B_{T} D$ given by objectwise application of the functor~$B_{T}$ to~$D$.
	
	\begin{lemma}\label{lm:borelsp}
		For any~$T$-diagram~$D$ over any connected finite poset~$\Pp$, there is a commutative diagram
		\begin{equation}\label{eq:borediag}
			\begin{tikzcd}
				B_{T} \hoc D \arrow{dr} \arrow{r}{\sim} & \hoc B_{T} D\arrow{d}\arrow{r}{\sim} & \hoc BS \arrow{dl}\\
				& BT &
			\end{tikzcd}
		\end{equation}
		where the horizontal arrows are homotopy equivalences.
	\end{lemma}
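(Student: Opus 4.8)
The plan is to establish the two horizontal maps as natural homotopy equivalences independently and then verify that both triangles commute over $BT$. Throughout I fix a single contractible free $T$-space $ET$, so that the Borel construction $B_{T}X=ET\times_{T}X=(ET\times X)/T$ is a functor on $T$-spaces; I write $B_{T}D$ for the diagram $c\mapsto B_{T}D(c)$ over $C$, equipped with the natural transformation $B_{T}D\to\kappa BT$ given objectwise by the bundle projections $B_{T}D(c)\to ET/T=BT$, and similarly $BS$ for the diagram $c\mapsto BS_{c}$ with the maps $BS_{c}\to BT$ induced by $S_{c}\hookrightarrow T$.

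First I would show that the Borel construction commutes with the homotopy colimit, i.e. $B_{T}\hoc D\cong\hoc B_{T}D$. Since $\hoc D=\lv\srep_{\bullet}D\rv$ and $T$ acts trivially on the simplices, the functor $ET\times_{T}(-)$ commutes with the coproducts $\bigsqcup_{\sigma\in N_{n}(C)}D(c_{n})$ defining $\srep_{\bullet}D$, with the products $\Delta^{n}\times(-)$, and with the quotient defining geometric realization (all of these being colimits or products with the trivial $T$-space $\Delta^{n}$); this yields a natural homeomorphism
\[
B_{T}\hoc D=ET\times_{T}\lv\srep_{\bullet}D\rv\cong\lv\srep_{\bullet}(B_{T}D)\rv=\hoc B_{T}D.
\]
Since $\srep_{\bullet}D$ is cofibrant by Remark~\ref{rem:bkss}, its realization computes the homotopy colimit, so this homeomorphism is in particular the required homotopy equivalence. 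By construction it is compatible with the projections to $BT$: the Borel projection $B_{T}\hoc D\to BT$, $[(e,x)]\mapsto[e]$, corresponds under it to the map $\hoc B_{T}D\to BT$ induced by the cocone $B_{T}D\to\kappa BT$, which gives the left-hand triangle.

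Next I would identify $B_{T}D\cong BS$ objectwise and naturally. By the orbit--stabilizer correspondence and the exact sequence~\eqref{eq:stabses}, each $D(c)$ is the homogeneous space $T/S_{c}$, so by the standard identification (using that $ET$ is also a model for $ES_{c}$)
\[
B_{T}D(c)=ET\times_{T}(T/S_{c})\cong ET/S_{c}=BS_{c}=BS(c),
\]
the homeomorphism being $[(e,tS_{c})]\mapsto(et)S_{c}$. For an arrow $c\le c'$ the map $D(c\le c')\colon T/S_{c}\to T/S_{c'}$ is the projection arising from $S_{c}\subseteq S_{c'}$, which corresponds precisely to $BS(c\le c')\colon BS_{c}\to BS_{c'}$; hence these homeomorphisms assemble into a natural isomorphism of diagrams $B_{T}D\cong BS$ over $C$. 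Each of them is moreover a map over $BT$, since $ET\times_{T}(T/S_{c})\to BT$ becomes the canonical projection $ET/S_{c}\to ET/T$ induced by $S_{c}\hookrightarrow T$. Applying $\hoc$ produces the right-hand homotopy equivalence $\hoc B_{T}D\overset{\sim}{\to}\hoc BS$ together with the commuting right-hand triangle.

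Finally, the structure map $\hoc BS\to BT$ is the one induced by the cocone of projections $BS_{c}\to BT$, factoring through $\hoc\kappa BT\cong\lv N(C)\rv\times BT\to BT$; connectedness of $C$ enters only here, guaranteeing that $\lv N(C)\rv$ is connected so that all three projections are the structure map of a single fibration over the connected base $BT$ and thus coincide under the two horizontal equivalences. I expect the step requiring the most care to be the first one: verifying that $ET\times_{T}(-)$ genuinely commutes with geometric realization of the simplicial space $\srep_{\bullet}D$ and that the resulting map is a homotopy equivalence. This is where the cofibrancy of $\srep_{\bullet}D$ and the good behaviour of products and quotients in compactly generated weakly Hausdorff spaces are essential, the remaining two identifications being formal once the maps to $BT$ are tracked.
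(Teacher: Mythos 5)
Your proof is correct, but it reaches the two equivalences by a different route than the paper. For the left-hand equivalence the paper regards the Borel construction itself as a homotopy colimit over the one-object topological category $T$, views $D$ as a diagram over $C\times T$, and invokes the Fubini theorem for homotopy colimits to interchange $\hoc_{C}$ and $\hoc_{T}$; you instead fix a model of $ET$ and argue by hand that $ET\times_{T}(-)$ preserves the coproducts, the products with the (trivially acted-on) simplices, and the quotient defining $\lv\srep_{\bullet}D\rv$, producing an actual homeomorphism $B_{T}\hoc D\cong\hoc B_{T}D$. Your argument is in effect a self-contained, point-set proof of the Fubini commutation in this special case: it avoids citing the general interchange theorem, and it yields homeomorphisms rather than mere homotopy equivalences, at the cost of being tied to a specific model of $ET$ and to the good colimit behaviour of compactly generated weakly Hausdorff spaces (which you correctly flag as the delicate point). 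For the right-hand equivalence the two arguments agree in substance: the paper states that $B_{T}D\to BS$ is a homotopy equivalence of diagrams because the $T$-action on each $D(c)$ is transitive with kernel $S_{c}$, while you make this explicit as the natural homeomorphism $ET\times_{T}(T/S_{c})\cong ET/S_{c}$, checked to be compatible with the arrows of the diagrams and with the projections to $BT$; this explicitness is what lets you verify commutativity of both triangles directly rather than tracking them through the Fubini identification. One small remark: connectedness of $C$ is not actually needed for the diagram to commute (your construction gives commutativity outright); in the paper it matters for the subsequent use of the fibration (connected fiber in the Eilenberg--Moore setting), so your closing paragraph slightly overstates its role, but this does not affect the proof.
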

	\begin{proof}
		Consider~$D\to 1$ as a morphism of diagrams over the category~$\Pp\times T$.
		There is a commutative diagram
		\begin{equation}\label{eq:bordiag}
			\begin{tikzcd}[sep=.3cm]
				\hoc B_{T} D \arrow[equal]{r} & \hoc_{\Pp}\hoc_{T} D \arrow{d} \arrow{r}  & \hoc_{\Pp}\hoc_{T} \kappa 1 \arrow{d}\\
				B_{T} \hoc D\arrow[equal]{r} & \hoc_{T}\hoc_{\Pp} D \arrow{r} & \hoc_{T}\hoc_{\Pp} \kappa 1
			\end{tikzcd}
		\end{equation}
		By Fubini theorem for homotopy colimits~\cite[Theorem 24.9]{ch-sc-02} (or by applying commutation of colimits to a respective cofibrant replacement), both vertical arrows in \eqref{eq:bordiag} are homotopy equivalences.
		The right vertical arrow in \eqref{eq:bordiag} is the identity of~$BT\times |\ord \Pp|$.
		The post-composition of these arrows with the projection gives the Borel fibration over~$BT$.
		This proves the claims about the left triangle in \eqref{eq:borediag}.
		The map~$B_{T} D\to BS$ is a homotopy equivalence of diagrams, because the~$T$-action is transitive on objects of~$D$ with kernel~$S$.
		This implies the claim about the right triangle in \eqref{eq:borediag}.
		The proof is complete.
	\end{proof}
	
	Lemma~\ref{lm:borelsp} directly implies the following (see~\cite{li-so-22}).
	
	\begin{proposition}[\cite{li-so-22}]\label{pr:borfib}
		Let~$D$ be any~$T$-diagram over any connected finite poset~$\Pp$.
		Then the Borel construction for the~$T$-space~$\hoc D$ takes form
		\begin{equation}\label{eq:borfib}
			\hoc D\to \hoc BS\to BT.
		\end{equation}
	\end{proposition}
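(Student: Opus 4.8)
The plan is to read the statement directly off the commutative triangle supplied by Lemma~\ref{lm:borelsp}, since the proposition is essentially a reformulation of that lemma. Recall first that the Borel fibration associated to the $T$-action on $\hoc D$ is, by definition, the sequence
\[
\hoc D\to B_{T}\hoc D\to BT,
\]
where $B_{T}\hoc D$ is the Borel construction, the fibre inclusion is the inclusion of the fibre over a basepoint of $BT$, and the projection is induced by the collapse $\hoc D\to \pt$. Thus to prove the proposition it suffices to identify both the total space and the fibre of this fibration in the asserted terms.

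First I would compose the two horizontal homotopy equivalences appearing in \eqref{eq:borediag}, namely $B_{T}\hoc D\xrightarrow{\sim}\hoc B_{T}D$ and $\hoc B_{T}D\xrightarrow{\sim}\hoc BS$, to obtain a single homotopy equivalence $B_{T}\hoc D\xrightarrow{\sim}\hoc BS$. Because the triangle in \eqref{eq:borediag} commutes, this composite is compatible with the three maps to $BT$; that is, it is a homotopy equivalence \emph{over} the base $BT$. This already identifies the total space $B_{T}\hoc D$ of the Borel fibration with $\hoc BS$, and exhibits the projection $\hoc BS\to BT$ of \eqref{eq:borediag} as the Borel projection transported along this equivalence. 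It then remains to identify the fibre: since a homotopy equivalence over $BT$ induces a homotopy equivalence on homotopy fibres, the homotopy fibre of $\hoc BS\to BT$ agrees with that of $B_{T}\hoc D\to BT$, which is $\hoc D$ by the definition of the Borel construction. Combining these observations yields the asserted form $\hoc D\to \hoc BS\to BT$ of the Borel fibration.

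There is no genuine obstacle here, as the proposition is a direct corollary of Lemma~\ref{lm:borelsp}; the only point requiring care is to use that the two horizontal arrows are equivalences \emph{over} $BT$ rather than merely abstract homotopy equivalences, so that the fibre comparison is legitimate. This is exactly the commutativity of the triangle \eqref{eq:borediag} guaranteed by Lemma~\ref{lm:borelsp}, and it is what allows the fibre $\hoc D$ to be carried unchanged across the identification of total spaces.
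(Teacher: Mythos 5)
Your proof is correct and follows the paper's own route: the paper likewise states that Proposition~\ref{pr:borfib} is a direct consequence of Lemma~\ref{lm:borelsp}, and your argument simply spells out that implication (composing the horizontal equivalences in \eqref{eq:borediag}, noting they are equivalences over~$BT$, and comparing fibres). The attention you pay to the equivalences being over~$BT$ is exactly the content carried by the commutativity of the triangle in Lemma~\ref{lm:borelsp}.
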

	
	For a Serre fibration~$p\colon E\to B$ with a connected fiber~$F$ the Eilenberg-Moore spectral sequence~$(E^{*,*}_{*},d)$ of the fiber inclusion is~\cite[p.233]{mcc-01}
	\[
	E^{-i,j}_{2}=\Tor^{-i,j}_{H^*(B)}(H^*(E),\Q)\Rightarrow H^{j-i}(F;\Q),
	\]
	where the first grading is cohomological (i.e. corresponds to element of the resolution of~$H^*(B)$-module~$H^*(E)$) and the second is inner (i.e. coincides with the degree of~$H^*(E)$).
	(We follow the negative grading notation that was used in~\cite{bu-pa-15}, for example.)
	If~$B$ is simply-connected, then~$(E^{*,*}_{*},d)$ converges strongly to~$H^*(F)$, see~\cite[p.233]{mcc-01}.
	
	By applying Eilenberg-Moore spectral sequence to the fibration \eqref{eq:borfib} one obtains the following.
	
	\begin{corollary}\label{cor:emss}
		For any~$T$-diagram~$D$ over any finite poset~$\Pp$, the $(-i,t)$-term of the second page for the corresponding Eilenberg-Moore spectral sequence for the Borel fibration of the~$T$-space~$\hoc D$ is isomorphic to
		\[
		\Tor^{-i,t}_{H^{*}(BT)}(H^{*}(\hoc BS),\Q)\Rightarrow H^{t-i}(\hoc D;\Q).
		\]
	\end{corollary}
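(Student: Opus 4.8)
The plan is to apply the general form of the Eilenberg--Moore spectral sequence recalled just above to the particular Borel fibration furnished by Proposition~\ref{pr:borfib}. First I would invoke Proposition~\ref{pr:borfib} to identify the Borel construction of the $T$-space $\hoc D$ with the fibration
\[
\hoc D\to \hoc BS\to BT,
\]
so that, in the notation of the general statement, the base is $B=BT$, the total space is $E=\hoc BS$, and the fiber is $F=\hoc D$. (When $C$ is disconnected the same identification applies componentwise, so there is no loss in treating the connected case on which Proposition~\ref{pr:borfib} is phrased.)

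Next I would verify that the hypotheses guaranteeing existence and strong convergence of the spectral sequence are met. The base $BT=(\C P^{\infty})^{k}$ is simply connected, so the Eilenberg--Moore spectral sequence of the inclusion of the fiber into $\hoc BS$ converges strongly to the cohomology of the fiber. Moreover the fiber $\hoc D$ is connected: since $C$ is connected, $\lv\ord C\rv$ is connected, and each value $D(c)$ is a compact torus and hence connected, so the description \eqref{eq:hocform} of $\hoc D$ yields a connected space. Thus the connected-fiber assumption of the general statement also holds.

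Finally I would substitute $B=BT$, $E=\hoc BS$ and $F=\hoc D$ into the general formula
\[
E_{2}^{-i,j}=\Tor^{-i,j}_{H^{*}(B)}(H^{*}(E),\Q)\Rightarrow H^{j-i}(F;\Q),
\]
and rename the inner grading $j$ to $t$, which directly produces
\[
\Tor^{-i,t}_{H^{*}(BT)}(H^{*}(\hoc BS),\Q)\Rightarrow H^{t-i}(\hoc D;\Q),
\]
as asserted. The argument is essentially formal once Proposition~\ref{pr:borfib} is in hand; the only point needing a moment of care is the verification that $BT$ is simply connected, which is what secures strong (rather than merely conditional) convergence. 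As $BT$ is a finite product of copies of $\C P^{\infty}$, this is immediate, so I expect no genuine obstacle.
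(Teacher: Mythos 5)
Your proposal is correct and matches the paper's own (one-line) argument: apply the general Eilenberg--Moore spectral sequence for a Serre fibration, as recalled before the corollary, to the Borel fibration $\hoc D\to \hoc BS\to BT$ from Proposition~\ref{pr:borfib}. Your additional verifications (simple connectivity of $BT$ for strong convergence, connectivity of the fiber $\hoc D$, and the componentwise reduction reconciling ``any finite poset'' with the connectedness hypothesis of Proposition~\ref{pr:borfib}) are exactly the points the paper leaves implicit.
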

	
	\section{EMSS and MVSS}\label{sec:compar}
	
	In this section, we introduce a comparison spectral sequence (CSS, for short), relating the Eilenberg-Moore and Mayer-Vietoris spectral sequences for any~$T$-diagram~$D$ (see Theorem~\ref{thm:bkem}).
	If the sheaf~$H^*(BS)$ is acyclic, then CSS abuts at the second page and gives isomorphism of EMSS and MVSS second pages, up to a change of the corresponding indices.
	
	Let~$D$ be any~$T$-diagram over any finite poset~$\Pp$.
	We need the following auxiliary lemma.
	
	\begin{lemma}\label{lm:torstalk}
		Let~$T\to T'$ be a monomorphism of tori. Then
		\[
		\Tor^{-p,2q}_{H^*(BT')}(H^{*}(BT),\Q)\cong
		\begin{cases}
			H^{q}(T'/T),\ p=q;\\
			0,\ \mbox{ otherwise}.
		\end{cases}
		\]
	\end{lemma}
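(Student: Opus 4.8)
The plan is to reduce the statement to a Koszul complex computation. First I would recall that over $\Q$ the relevant cohomology rings are polynomial algebras, $H^*(BT;\Q)=\Q[x_1,\dots,x_n]$ and $H^*(BT';\Q)=\Q[y_1,\dots,y_m]$ with $n=\rk T$, $m=\rk T'$ and all generators in degree $2$, and that the map $BT\to BT'$ classifying the inclusion induces the ring homomorphism $H^*(BT')\to H^*(BT)$. The key observation is that this homomorphism is surjective: in degree $2$ it is dual to the injection $\pi_1(T)\to\pi_1(T')$ (injective because $T\to T'$ is a monomorphism of tori), hence a surjection $H^2(BT';\Q)\to H^2(BT;\Q)$, and surjectivity on the degree-$2$ generators forces surjectivity of the whole map of polynomial algebras.

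Next I would identify the kernel. Choosing suitable $\Q$-bases one may assume the map is the projection $\Q[y_1,\dots,y_m]\to\Q[y_1,\dots,y_n]$ killing $y_{n+1},\dots,y_m$; thus $H^*(BT)\cong H^*(BT')/(z_1,\dots,z_r)$ where $z_i=y_{n+i}$ and $r=m-n=\dim(T'/T)$, and $(z_1,\dots,z_r)$ is a regular sequence of elements of inner degree $2$ in the polynomial ring $H^*(BT')$. The Koszul complex on this regular sequence then furnishes a free resolution of $H^*(BT)$ as a graded $H^*(BT')$-module, whose $p$-th term $\Lambda^p(\Q^r)\otimes H^*(BT')$ sits in homological degree $p$ (cohomological degree $-p$) and contributes inner degree $2p$ from the exterior generators.

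To compute the $\Tor$-module I would tensor this resolution with $\Q=H^*(BT')/H^*(BT')^{+}$ over $H^*(BT')$. Since each $z_i$ lies in the augmentation ideal, every differential becomes zero after this base change, so $\Tor^{-p,*}_{H^*(BT')}(H^*(BT),\Q)=\Lambda^p(\Q^r)$ is concentrated in inner degree $2p$. This gives exactly the claimed vanishing off the diagonal $p=q$, while on the diagonal $\Tor^{-q,2q}\cong\Lambda^q(\Q^r)$ has dimension $\binom{r}{q}$. Finally, since $T\hookrightarrow T'$ is a monomorphism, the quotient $T'/T$ is a torus of rank $r$, so $H^*(T'/T;\Q)\cong\Lambda^{*}(\Q^r)$ and $H^q(T'/T)\cong\Lambda^q(\Q^r)$, matching the computation.

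There is no serious obstacle here; the computation is standard. The only point that needs care is justifying that the kernel is generated by a regular sequence of length exactly $\dim(T'/T)$, which is precisely where the monomorphism hypothesis enters, through surjectivity on $H^2$; once the map is presented as a projection of polynomial rings, regularity is automatic because a subset of polynomial generators always forms a regular sequence. I would also remark that the entire statement can be read off the Eilenberg-Moore spectral sequence of the fibration $T'/T\to BT\to BT'$, whose collapse recovers both the bigrading and the identification with $H^*(T'/T)$; the Koszul argument above is simply the explicit incarnation of that collapse.
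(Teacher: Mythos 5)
Your proposal is correct and is essentially the paper's own argument: the paper likewise resolves $H^{*}(BT)$ over $H^{*}(BT')$ by the Koszul complex associated to the short exact sequence $0\to H^{1}(T'/T)\to H^{2}(BT')\to H^{2}(BT)\to 0$ and then applies $-\otimes_{H^{*}(BT')}\Q$, the differentials vanishing because the kernel lies in the augmentation ideal. The only cosmetic difference is that the paper keeps the resolution coordinate-free while you choose bases to present the map as a projection of polynomial rings before invoking regularity; the content and the bigraded bookkeeping are identical.
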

	\begin{proof}
		Consider the Koszul resolution~\cite{we-97} 
		\[
		\begin{tikzcd}[sep=.3cm]
			0\arrow{r} & H^q(T'/T)\arrow{r} & H^{q-1}(T'/T)\otimes H^{2}(BT') \arrow{r} & \cdots \arrow{r} & H^{2q}(BT')\arrow{r} & H^{2q}(BT)\arrow{r} & 0,
		\end{tikzcd}
		\]
		defined by the short exact sequence
		\[
		\begin{tikzcd}
			0\arrow{r} & H^{1}(T'/T)\arrow{r} & H^{2}(BT')\arrow{r} & H^{2}(BT)\arrow{r} & 0.
		\end{tikzcd}
		\]
		Applying the functor~$-\otimes_{H^*(BT')}\Q$ one obtains trivial cohomology except the boundary term in the cohomological degree~$-q$.
		This gives the required formula by the definition of~$\Tor$ as a left derived functor of tensor product~\cite{we-97}.
	\end{proof}
	
	\begin{definition}
		Denote the~$(-i)$-th group of the Koszul resolution
		\begin{equation}\label{eq:koszulco}
			\begin{tikzcd}[sep=.5cm]
				0 \arrow{r} & \Lambda^{t}(T)\otimes H^{*}(BT) \arrow{r}{f_{t,1}} & \Lambda^{t-1}(T)\otimes H^{*}(BT) \arrow{r}{f_{t,t}} & \cdots \arrow{r} & H^{*}(BT) \arrow{r} & \Bbbk \arrow{r} & 0,
			\end{tikzcd}
		\end{equation}
		of the trivial~$H^{*}(BT)$-module~$\Q$ by~$R^{i}$.
		Let~$D$ be a~$T$-diagram, with the associated stabilizer diagram~$S$ (see \eqref{eq:stabses}).
		Define the~$t$-graded bicomplex~$(C^{i,j}_{t}(D),d_{h},d_{v})$, where (see Definition~\ref{def:cellcoh})
		\[
		C^{i,j}_{t}(D):=C^{j}(\Pp^{op};(R^{2t-i}\otimes H^{*}(BS))^{2t}),
		\]
		power~$2t$ denotes taking the respective grading of the~$H^*(BT)$-module,~$d_{h}$ is the differential for the sheaf cohomology (increasing~$j$ by~$1$) and~$d_{v}$ is induced by the morphism from the Koszul resolution (increasing~$i$ by~$1$).
	\end{definition}
	
	\begin{lemma}\label{lm:kunneth}
		For any cosheaf~$\mathcal{F}$ on~$\Pp$ and any~$\Q$-module~$V$ one has
		\[
		H^{i}(\Pp^{op};\mathcal{F}\otimes \kappa V)\cong H^{i}(\Pp^{op};\mathcal{F})\otimes V.
		\]
	\end{lemma}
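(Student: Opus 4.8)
The statement is a Künneth-type isomorphism asserting that tensoring a cosheaf $\mathcal{F}$ with a constant cosheaf $\kappa V$ commutes with sheaf cohomology on $C^{op}$. Since $V$ is a $\Q$-module (so flat), the plan is to reduce everything to the level of cochain complexes and use that $-\otimes_{\Q} V$ is exact. First I would unwind the definition of $H^{i}(C^{op};\mathcal{F}\otimes\kappa V)$ via Definition~\ref{def:shdef}: this cohomology is computed by the cochain complex $C^{cw}_{*}(\ord C; s(\mathcal{F}\otimes\kappa V))$. The key observation is that the refinement operation and the cellular (co)chain functor are both $\Q$-linear and built out of direct sums and the structure maps of the cosheaf, none of which interact with the constant factor $V$.

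The main step is to check that the cochain complex for $\mathcal{F}\otimes\kappa V$ is canonically isomorphic to the cochain complex for $\mathcal{F}$ tensored with $V$. Concretely, at the level of cochain groups one has
\[
C^{cw}_{i}(\ord C; s(\mathcal{F}\otimes\kappa V))
= \bigoplus \bigl(\mathcal{F}(c_0)\otimes V\bigr)
\cong \Bigl(\bigoplus \mathcal{F}(c_0)\Bigr)\otimes V
= C^{cw}_{i}(\ord C; s\mathcal{F})\otimes V,
\]
the sums being taken over the relevant chains in $\ord C$, and where I have used that $\kappa V$ assigns $V$ to every object and the identity to every structure map. Because the constant cosheaf contributes $\mathrm{id}_{V}$ to every restriction map, the differential $d$ on the left factors as $d\otimes\mathrm{id}_{V}$: the structure maps of $\mathcal{F}\otimes\kappa V$ are $\mathcal{F}(c'\leq c)\otimes\mathrm{id}_{V}$, and the index numbers $[c:c']$ are scalars unaffected by the tensor factor. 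Thus the two chain complexes agree up to the natural isomorphism above, compatibly with differentials.

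Finally, I would take cohomology. Since $\Q$ is a field, $V$ is flat over $\Q$, so the functor $-\otimes_{\Q} V$ is exact and therefore commutes with taking homology of the cochain complex. Applying this to the identification of complexes above yields
\[
H^{i}\bigl(C^{op};\mathcal{F}\otimes\kappa V\bigr)
\cong H^{i}\bigl(C^{cw}_{*}(\ord C; s\mathcal{F})\otimes V\bigr)
\cong H^{i}\bigl(C^{op};\mathcal{F}\bigr)\otimes V,
\]
which is the desired isomorphism, and naturality in $\mathcal{F}$ is clear from the construction. I do not expect any serious obstacle here: the only point requiring mild care is the bookkeeping that the refinement $s(-)$ and the cellular chain complex genuinely split off the constant tensor factor objectwise, and that the differentials match on the nose; over a field $\Q$ the passage to cohomology is automatic by flatness.
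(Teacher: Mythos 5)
Your proof is correct and is essentially the paper's own argument: the paper simply says the lemma ``follows directly from the definitions by flatness of the $\Q$-module $V$,'' and your write-up spells out exactly that reasoning (splitting off the constant factor $V$ at the level of cochain complexes, then using exactness of $-\otimes_{\Q} V$ to pass to cohomology). No discrepancies; yours is just the detailed version of the same one-line proof.
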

	\begin{proof}
		Follows directly from the definitions by flatness of the~$\Q$-module~$V$.
	\end{proof}
	
	\begin{lemma}\label{lm:bicomp}
		One has
		\[
		H^{i}_{v}(H^{j}_{h}(C^{*,*}_{t}(D)))=
		\Tor^{-(2t-i),2t}_{H^{*}(BT)}(\lm^j H^{*}(BS),\Q);
		\]
		\[
		H^{j}_{h}(H^{i}_{v}(C^{*,*}_{t}(D)))=
		\begin{cases}
			\lm^j H^{t}(D),\ i=t,\\
			0,\ i\neq t.
		\end{cases}
		\]
	\end{lemma}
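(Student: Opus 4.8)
The plan is to compute the two iterated cohomologies of the bicomplex $(C^{i,j}_{t},d_h,d_v)$ directly from its construction, using that $d_h$ computes sheaf cohomology over $C^{op}$ while $d_v$ is the Koszul differential of the resolution~\eqref{eq:koszulco}. Before anything else I would record that $(C^{i,j}_{t},d_h,d_v)$ is a genuine bicomplex, i.e.\ $d_h d_v=\pm d_v d_h$; this is immediate because $d_v$ is induced by a morphism of cosheaves (hence natural in the cosheaf argument) while $d_h$ is the natural sheaf differential, so the two commute up to the usual sign. The four load-bearing inputs will be Proposition~\ref{prop:limcoh} (identifying $H^{j}(C^{op};-)$ with $\lm^{j}$), the K\"unneth-type Lemma~\ref{lm:kunneth}, the stalkwise $\Tor$ computation of Lemma~\ref{lm:torstalk}, and exactness of the cochain functor $C^{j}(C^{op};-)$ in its cosheaf variable.

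For the first identity I would take horizontal cohomology first, at fixed internal degree $2t$ and fixed Koszul position. Since $R^{2t-i}$ contributes the factor $\Lambda^{2t-i}(T)$, which is constant along $C$, Lemma~\ref{lm:kunneth} pulls it out of the sheaf cohomology and Proposition~\ref{prop:limcoh} identifies the remaining factor, giving
\[
H^{j}_{h}(C^{i,*}_{t})\cong \bigl(R^{2t-i}\otimes \lm^{j} H^{*}(BS)\bigr)^{2t}.
\]
The key observation is then that, under the tensor over $H^{*}(BT)$ built into the definition, the vertical differential acts as the Koszul differential of~\eqref{eq:koszulco} using the $H^{*}(BT)$-module structure on $M:=\lm^{j}H^{*}(BS)$ inherited from the sheaf $H^{*}(BS)$; in other words the vertical complex is $R^{\bullet}\otimes_{H^{*}(BT)}M$ in internal degree $2t$, which is exactly a Koszul complex computing $\Tor_{H^{*}(BT)}(M,\Q)$. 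Reading off the resolution position $-(2t-i)$ and the internal degree $2t$ yields $H^{i}_{v}(H^{j}_{h}(C^{*,*}_{t}))\cong \Tor^{-(2t-i),2t}_{H^{*}(BT)}(\lm^{j}H^{*}(BS),\Q)$, as claimed.

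For the second identity I would reverse the order and take vertical cohomology first. Because $C^{j}(C^{op};-)$ is a finite direct sum of stalk evaluations, it is exact in the cosheaf argument and therefore commutes with $H^{i}_{v}$; this reduces the computation to the stalkwise Koszul cohomology, which at each $c$ is $\Tor_{H^{*}(BT)}(H^{*}(BS_{c}),\Q)$. Applying Lemma~\ref{lm:torstalk} to the monomorphism $S_{c}\hookrightarrow T$ shows this is concentrated in Koszul position $2t-i=t$, i.e.\ at $i=t$, where it equals $H^{t}(T/S_{c})=H^{t}(D(c))$ since $D(c)=T/S_{c}$. Thus $H^{i}_{v}(C^{*,*}_{t})$ is the cosheaf $H^{t}(D)$ placed at $i=t$ and zero otherwise, and taking horizontal (sheaf) cohomology afterward gives $\lm^{j}H^{t}(D)$ at $i=t$ and $0$ for $i\neq t$. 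The only genuinely delicate points are the grading bookkeeping — the internal-degree-$2t$ constraint couples the Koszul index $2t-i$ to the cohomological degree of $H^{*}(BS)$, and one must confirm the resolution position $-(2t-i)$ matches the $\Tor$ homological index — together with the exactness of $C^{j}(C^{op};-)$ that licenses interchanging $H^{i}_{v}$ with the cochain functor in the second computation; everything else is a routine unwinding of definitions.
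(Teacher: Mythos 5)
Your proof is correct and takes essentially the same route as the paper's: for the first identity, horizontal cohomology first, using Lemma~\ref{lm:kunneth} (with Proposition~\ref{prop:limcoh}) to pull out the constant Koszul factor and then identifying the vertical complex as the Koszul complex computing $\Tor_{H^{*}(BT)}(\lm^{j}H^{*}(BS),\Q)$; for the second, stalkwise Koszul cohomology first and then Lemma~\ref{lm:torstalk} to see concentration at $i=t$ with stalks $H^{t}(D(c))$. Your two extra verifications --- that $d_{h}d_{v}=\pm d_{v}d_{h}$ and that exactness of $C^{j}(C^{op};-)$ lets it commute with vertical cohomology --- are points the paper leaves implicit, not a different argument.
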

	\begin{proof}
		We make the following computation.
		\begin{multline}\label{eq:hv}
			H^{j}_{h}(C^{i,*}_{t}(D))=
			H^{j}(\Pp^{op};(R^{2t-i}\otimes H^{*}(BS))^{2t} )\cong
			\biggl(R^{2t-i}\otimes H^{j}(\Pp^{op};H^{*}(BS))\biggr)^{2t}\cong\\
			\cong\biggl(R^{2t-i}\otimes \lmm^{j} H^{*}(BS)\biggr)^{2t}.
		\end{multline}
		The last isomorphism is given by Lemma~\ref{lm:kunneth}.
		The above isomorphisms respect the vertical differential whose action on the right-hand side of \eqref{eq:hv} is induced by~$f_{t,*}\otimes \Id$, see \eqref{eq:koszulco}.
		Therefore, the first equality of the lemma follows by the definition of~$\Tor$-groups using the resolution~$(P^{*},f_{t,*})$.
		By the definition of the~$\Tor$-groups (using the resolution \eqref{eq:koszulco}), we find
		\[
		H^{j}_{h}(H^{i}_{v}(C^{*,*}_{t}(D)))=
		H^{j}(\Pp^{op};\Tor^{-(2t-i),2t}_{H^{*}(BT)}(H^{*}(BS),\Q)).
		\]
		The second equality of the lemma now follows from this computation by Lemma~\ref{lm:torstalk}.
	\end{proof}
	
	\begin{theorem}\label{thm:bkem}
		For any~$T$-diagram~$D$ there exists the~$t$-graded spectral sequence ($t\in\Z$)
		\begin{equation}\label{eq:newss}
			\Tor^{-(2t-i),2t}_{H^{*}(BT)}(\lm^{j} H^{*}(BS),\Q)\Rightarrow \lm^{i+j-t} H^{t}(D).
		\end{equation}
	\end{theorem}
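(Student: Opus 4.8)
The plan is to obtain the spectral sequence \eqref{eq:newss} as one of the two spectral sequences associated with the $t$-graded bicomplex $(C^{i,j}_{t},d_h,d_v)$ introduced just above, using the two iterated-cohomology computations already established in Lemma~\ref{lm:bicomp}. Recall the standard fact that a first-quadrant double complex gives rise to two spectral sequences, both converging to the cohomology of the total complex, one obtained by filtering along rows (taking $H_h$ first, then $H_v$) and the other by filtering along columns (taking $H_v$ first, then $H_h$). The strategy is to run these two spectral sequences against each other: use the second (column-first) filtration to identify the abutment, and use the first (row-first) filtration to read off the $E_2$-page as the desired $\Tor$-groups.

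First I would set up the total complex $\mathrm{Tot}^{*}(C^{*,*}_{t})$ for each fixed inner degree $t$ and invoke the two standard spectral sequences of a bicomplex. For the \emph{abutment}, I would use the spectral sequence that computes $H_v$ first and then $H_h$. By the second formula of Lemma~\ref{lm:bicomp},
\[
H^{j}_{h}(H^{i}_{v}(C^{*,*}_{t}))=
\begin{cases}
\lm^{j} H^{t}(D),\ i=t,\\
0,\ i\neq t,
\end{cases}
\]
so this spectral sequence is concentrated in the single column $i=t$. Being concentrated in one column, it collapses immediately, and the total cohomology in total degree $n$ is
\[
H^{n}(\mathrm{Tot}^{*}(C^{*,*}_{t}))\cong \lm^{\,n-t} H^{t}(D),
\]
where the shift by $t$ comes from the position $i=t$ of the nonvanishing column. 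This identifies the abutment of \eqref{eq:newss}: a class in bidegree $(i,j)$ on the $E_2$-page contributes to total degree $i+j$, which matches $\lm^{i+j-t}H^{t}(D)$ after setting $n=i+j$.

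Next I would use the \emph{other} spectral sequence, computing $H_h$ first and then $H_v$. By the first formula of Lemma~\ref{lm:bicomp},
\[
H^{i}_{v}(H^{j}_{h}(C^{*,*}_{t}))=
\Tor^{-(2t-i),2t}_{H^{*}(BT)}(\lm^{j} H^{*}(BS),\Q),
\]
which is exactly the $E_2$-term displayed on the left-hand side of \eqref{eq:newss}. Since both spectral sequences converge to the same total cohomology $H^{*}(\mathrm{Tot}^{*}(C^{*,*}_{t}))$, and since the first one was shown to have abutment $\lm^{i+j-t}H^{t}(D)$, we conclude that the row-first spectral sequence has $E_2$-page given by the $\Tor$-groups above and converges to $\lm^{i+j-t}H^{t}(D)$, as claimed. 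This yields the asserted $t$-graded spectral sequence for every $t\in\Z$.

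The main point requiring care—rather than a genuine obstacle—is bookkeeping of the gradings and the precise total-degree conventions: one must check that the homological index $-(2t-i)$ on the $\Tor$-group, the inner degree $2t$, and the sheaf-cohomological index $j$ combine so that the differentials of the row-first spectral sequence have the correct bidegree and so that the total degree $i+j$ of an $E_2$-class lands on the abutment term $\lm^{i+j-t}H^{t}(D)$. Since both iterated cohomologies have already been computed in Lemma~\ref{lm:bicomp}, the only work left is to invoke the two-spectral-sequences-of-a-bicomplex formalism and verify that the collapse of the column-first spectral sequence in the single column $i=t$ correctly pins down the abutment; convergence is automatic because $C$ is a finite poset, so each $C^{i,j}_{t}$ is a bounded bicomplex of finitely generated $\Q$-modules.
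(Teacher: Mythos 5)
Your proposal is correct and coincides with the paper's own (much terser) proof: the paper likewise forms the two spectral sequences of the bicomplex $(C^{i,j}_{t},d_{h},d_{v})$ and applies Lemma~\ref{lm:bicomp}, with the column-first sequence collapsing onto the single column $i=t$ to identify the abutment as $\lm^{i+j-t}H^{t}(D)$ and the row-first sequence supplying the $\Tor$-groups on its $E_2$-page. Your write-up simply makes explicit the bookkeeping that the paper leaves to the reader.
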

	\begin{proof}
		Follows directly by considering the bicomplex spectral sequences corresponding to\\ $(C^{i,j}_{t}(D),d_{h},d_{v})$ and using Lemma~\ref{lm:bicomp}.
	\end{proof}
	
	\begin{corollary}\label{cor:acyc}
		Suppose that the sheaf~$H^{*}(BS)$ is acyclic.
		Then
		\begin{equation}\label{eq:collnewss}
			\Tor^{-i,2t}_{H^{*}(BT)}(H^{*}(\hoc BS),\Q)\cong \lm^{-i+t} H^{t}(D),
		\end{equation}
		and MVSS and EMSS for~$D$ are isomorphic from the second pages up to a linear change of indices.
	\end{corollary}
	\begin{proof}
		By Theorem~\ref{thm:btoric}, the MVSS for the diagram~$BS$ collapses.
		By the acyclicity assumption this implies the isomorphism
		\begin{equation}\label{eq:bscoll}
			H^{*}(\hoc BS)\cong \lm H^{*}(BS),
		\end{equation}
		of~$H^*(BT)$-modules.
		The spectral sequence \eqref{eq:newss} collapses, which again follows directly by the acyclicity assumption.
		This implies \eqref{eq:collnewss}.
	\end{proof}
	
	\begin{remark}\label{rem:acycnull}
		For an acyclic sheaf~$H^*(BS)$, one has
		\[
		H^i(\Pp^{op};\kappa\Q)\cong H^i(\lv\ord \Pp\rv;\Q)=0,\ i\neq 0,
		\]
		because~$H^0(BS)=\kappa\Q$ holds.
	\end{remark}
	
	\begin{remark}
		The comparison spectral sequence \eqref{eq:newss} measures commutativity of~$\Tor$ and~$\lm$, because by Lemma~\ref{lm:torstalk} one has
		\[
		\lm^{j} \Tor^{-i,2t}_{H^{*}(BT)}(H^{*}(BS),\Q)=
		\begin{cases}
			\lm^{j} H^{t}(D),\ i=t;\\
			0,\ \mbox{ otherwise}.
		\end{cases}
		\]
		The functors~$-\otimes_{H^{*}(BT)} \Q$ and~$\lm$ are right- and left-exact, respectively.
		The result of this section may be regarded as a Grothendieck-type spectral sequence~\cite[pp.150--153]{we-97}.
	\end{remark}
	
	\section{Cohen-Macaulay torus actions on~$T$-diagrams}\label{sec:eftoric}
	
	In this section we prove a new criterion of Cohen-Macaulayness for the homotopy colimit of a~$T$-diagram~$D$ over any CW poset~$\Pp$.
	This notion is a generalization of equivariant formality property for torus actions without fixed points~\cite{go-to-10}.
	We compute the bigraded Betti numbers under Cohen-Macaulayness assumption.
	
	\begin{definition}[\cite{fr-pu-06},~\cite{go-to-10}]
		A~$T$-space~$X$, where~$T$ has rank~$k$ and~$b\geq 0$ is the lowest dimension of a~$T$-orbit in~$X$, is called \textit{Cohen-Macaulay} if the following augmented cochain complex
		\begin{equation}\label{eq:abfp}
			\begin{tikzcd}[column sep=0.5cm]
				0 \arrow{r} & H^*_T(X) \arrow{r} & H^*_T(X_b) \arrow{r} & H^{*+1}_T(X_{b+1},X_{b}) \arrow{r} & \cdots \arrow{r} & H^{*+k-b}_{T}(X_{k},X_{k-1}) \arrow{r} & 0,
			\end{tikzcd}
		\end{equation}
		of~$H^{*}(BT)$-modules over~$\Q$ is exact.
		The grading is given by~$-1,0,\dots,k-b$; i.e. zero cohomology is the augmentation term~$H^*_T(X)$.
		If~$b=0$ then the Cohen-Macaulay~$T$-action is called \textit{equivariantly formal} over~$\Q$.
	\end{definition}
	
	The proof of the following fact is similar to the proof of~\cite[Proposition 2.4]{ay-ma-23} (compare with Lemma~\ref{lm:orbf}; also see~\cite[Theorem 2.5.3]{jo-98}).
	
	\begin{lemma}\label{lm:compspec}
		For any strongly reduced~$T$-diagram~$D$ over any CW poset~$\Pp$ the~$i$-th cohomology group of the graded cochain complex \eqref{eq:abfp} is isomorphic to~$H^{cw}_{d-i}(\Pp;H^{*}(BS))$ (as a~$\Q$-vector space) for any~$i\geq 0$, where~$d=\dim \Pp=k-b$.
	\end{lemma}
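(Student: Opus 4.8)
The plan is to identify the Atiyah--Bredon--Franz--Puppe complex \eqref{eq:abfp} with a cochain complex computing cellular homology of the sheaf $H^*(BS)$, and then to read off its cohomology in degree $i$ as $H^{cw}_{d-i}(C;H^*(BS))$. First I would use the strong reducedness of $D$, which by Proposition~\ref{pr:comp} guarantees that the coskeletal filtration on $\hoc D$ coincides with the orbit filtration; this is what lets me index the terms $H^{*+j}_T(X_{b+j},X_{b+j-1})$ of \eqref{eq:abfp} by the elements $c\in C$ of rank $\rk c = b+j$ (equivalently $\codim c = d-j$ in $C$). Then I would compute each relative term. Since $\hoc D$ sits in the Borel fibration $\hoc D\to \hoc BS\to BT$ (Proposition~\ref{pr:borfib}), the equivariant cohomology of the orbit strata is controlled by the stabilizer diagram $S$, and each open stratum corresponding to $c$ contributes $H^*(BS_c)$ weighted by the relative (co)homology of the pseudocell pair $(G_c,\partial G_c)$.

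The key technical step is a local computation showing
\[
H^{*+j}_T(X_{b+j},X_{b+j-1})\cong \bigoplus_{\rk c = b+j} H^*(BS_c)\otimes H^{*}(G_c,\partial G_c),
\]
which should follow from the excision/collapse description of the coskeletal filtration exactly as in the proof of Lemma~\ref{lm:orbf}, but now applied to the diagram $BS$ of Borel spaces rather than to $D$ itself. Here I would invoke the $T$-equivariant analogue: the $j$-th subquotient of the orbit filtration on the Borel space $\hoc BS$ breaks up, stratum by stratum, into a sum over cells $c$ with $\codim c = d-j$, and the stalk at $c$ is precisely $H^*(BS_c)=H^*(BS)(c)$. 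Matching the differentials of \eqref{eq:abfp} with the incidence coefficients $[c':c]$ appearing in Definition~\ref{def:cellcoh} then shows that \eqref{eq:abfp} is isomorphic, as a complex of $H^*(BT)$-modules, to the cellular chain complex $(C^{cw}_{*}(C;H^*(BS)),d)$ computing $H^{cw}_{*}(C;H^*(BS))$, with the reindexing $i\mapsto d-i$ dictated by $\codim c = d-\rk c$.

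The identification of differentials is where I expect the main obstacle to lie: one must verify that the connecting homomorphisms in the long exact sequences of the filtration pairs agree, up to the orientation signs encoded by the index numbers $[c':c]$, with the cosheaf differential $d(c\times y)=\sum_{c'\leq_1 c}[c:c']\,c'\times \mc{H}^*(c'\leq c)y$ of the local-cohomology cosheaf. This is precisely the content that makes Lemma~\ref{lm:wd} and the Zeeman--McCrory machinery of Section~\ref{sec:zmss} applicable; the homology-manifold hypothesis on $C$ enters to ensure the local cohomology cosheaf $\mc{U}^{d+*}$ (hence the pseudocell pairs $(G_c,\partial G_c)$) behaves as in the orientable case, so that the weighting factors $H^*(G_c,\partial G_c)$ collapse to a single copy of $\Q$ in the correct degree and contribute no extraneous terms. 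Once the two differentials are matched, the claimed isomorphism of the $i$-th cohomology of \eqref{eq:abfp} with $H^{cw}_{d-i}(C;H^*(BS))$ is immediate, and the equality $d=\dim C=k-b$ follows from the strong reducedness identity $\dim S_c=\rk c$ recorded after Definition~\ref{def:red}.
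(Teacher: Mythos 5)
Your proposal is correct and follows essentially the same route as the paper, whose proof of Lemma~\ref{lm:compspec} consists only of a reference to~\cite[Proposition 2.4]{ay-ma-23} (also~\cite[Theorem 2.5.3]{jo-98}): those arguments proceed exactly as you outline, excising along the orbit (equal to the coskeletal, by strong reducedness) filtration of the Borel construction, identifying the subquotients as sums of~$H^*(BS_c)\otimes H^*(G_c,\partial G_c)$, using the homology-manifold hypothesis to collapse each factor~$H^*(G_c,\partial G_c)$ to a single copy of~$\Q$ in degree~$\codim c$, and matching the connecting homomorphisms with the incidence numbers~$[c':c]$. The only slip is your indexing: since strong reducedness gives~$\dim S_c=\rk c$, hence orbit dimension~$k-\rk c$, the strata of~$X_{b+j}\setminus X_{b+j-1}$ correspond to cells with~$\rk c=d-j$ (i.e.~$\codim_{C} c=j$), not~$\rk c=b+j$ as in your display; correcting this is exactly what makes your final reindexing~$i\mapsto d-i$ land on~$H^{cw}_{d-i}(C;H^{*}(BS))$.
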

		
	\begin{theorem}\label{thm:cmdiag}
		Let~$D$ be any strongly reduced~$T$-diagram over a CW poset~$\Pp$.
		Then the~$T$-action on~$\hoc D$ is Cohen-Macaulay iff the sheaf~$H^{*}(BS)$ is acyclic.
		In this case there are the isomorphisms
		\[
		H^{cw}_{d-i}(\Pp;H^{*}(BS))\cong
		\begin{cases}
			H_{T}^{*}(\hoc D),\ i=0,\\
			0,\ \mbox{otherwise}.
		\end{cases}
		\]
		\[
		\lm^{t-i} H^{t}(D)\cong
		\begin{cases}
			\Tor^{-i,2t}_{H^{*}(BT)}(H^{*}_{T}(\hoc D),\Q),\ i\leq t,\\
			0,\ \mbox{otherwise}.
		\end{cases}
		\]
	\end{theorem}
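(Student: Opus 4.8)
The plan is to assemble the statement from three tools already available in the excerpt: Lemma~\ref{lm:compspec}, which identifies the cohomology of the Atiyah--Bredon--Franz--Puppe complex \eqref{eq:abfp} with the cellular homology $H^{cw}_{d-i}(C;H^*(BS))$; the Zeeman--McCrory duality of Theorem~\ref{thm:zmbuch}, which converts this cellular homology into sheaf cohomology; and Corollary~\ref{cor:acyc}, which extracts all the Tor-consequences once the sheaf $H^*(BS)$ is known to be acyclic. First I would establish the equivalence. By Lemma~\ref{lm:compspec}, the $i$-th cohomology of \eqref{eq:abfp} is $H^{cw}_{d-i}(C;H^*(BS))$, so the $T$-action on $\hoc D$ is Cohen-Macaulay precisely when these groups vanish for all $i\neq 0$. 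Since $C$ is a homology manifold of dimension $d$, Theorem~\ref{thm:zmbuch} gives the isomorphism $H^{cw}_{d-i}(C;H^*(BS))\cong H^i(C^{op};H^*(BS))$, and the right-hand side vanishes for $i\neq 0$ exactly when the sheaf $H^*(BS)$ is acyclic in the sense of Definition~\ref{def:shdef}. This proves the stated iff, and simultaneously yields the first displayed isomorphism in the Cohen-Macaulay case: only the $i=0$ term survives, and it computes $H^0$ of \eqref{eq:abfp}, which by exactness of that complex at its initial term equals $H^*_T(\hoc D)$ itself.

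For the second displayed formula I would invoke Corollary~\ref{cor:acyc}. Under the acyclicity of $H^*(BS)$ just established, that corollary provides the isomorphism \eqref{eq:collnewss}, namely
\[
\Tor^{-i,2t}_{H^*(BT)}(H^*(\col BS),\Q)\cong \lm^{-i+t} H^t(D),
\]
together with the vanishing \eqref{eq:barid}, $\lm^i H^j(D)=0$ for $i>j$. Combining \eqref{eq:bscoll} with the identification $H^*_T(\hoc D)\cong H^*(\hoc BS)$ from the Borel-space description (Proposition~\ref{pr:borfib}) and the collapse of BKSS for $BS$ (Theorem~\ref{thm:btoric}), I would replace $H^*(\col BS)$ by $H^*_T(\hoc D)\cong\lm H^*(BS)$, so that the Tor-group becomes $\Tor^{-i,2t}_{H^*(BT)}(H^*_T(\hoc D),\Q)$. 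Relabelling the bar-degree via $-i+t\mapsto t-i$ then matches the left-hand side $\lm^{t-i}H^t(D)$, and the vanishing \eqref{eq:barid} forces this to be zero when $t-i>t$, i.e.\ when $i<0$; dually one checks the index range, yielding the case split ``$i\leq t$'' versus ``otherwise'' in the statement.

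The main obstacle I expect is bookkeeping of the gradings rather than any conceptual difficulty: one must be careful that the ``$2t$'' inner degree in the Koszul/Tor indexing of Corollary~\ref{cor:acyc} lines up with the single grading in which $\lm^{t-i}H^t(D)$ is stated, and that the negative cohomological grading convention for Tor is consistently applied when passing between \eqref{eq:collnewss} and the final display. A secondary point requiring care is verifying that the edge term of the ABFP complex \eqref{eq:abfp} really identifies with $H^*_T(\hoc D)$ as an $H^*(BT)$-module (not merely as a $\Q$-vector space); this follows because Lemma~\ref{lm:compspec} is compatible with the module structure and because the complex \eqref{eq:abfp} is defined so that $H^*_T(X)$ is its kernel in degree zero. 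Once these identifications are fixed, the two displayed isomorphisms drop out directly, and no further computation is needed.
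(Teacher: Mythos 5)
Your overall strategy is the same as the paper's: both proofs rest on Lemma~\ref{lm:compspec}, Zeeman--McCrory duality (Corollary~\ref{cor:zmss}/Theorem~\ref{thm:zmbuch}), BKSS collapse for~$BS$ (Theorem~\ref{thm:btoric}, Proposition~\ref{pr:borfib}), and Corollary~\ref{cor:acyc} for the~$\Tor$-formula; your treatment of the second displayed isomorphism, including the passage from \eqref{eq:collnewss} and \eqref{eq:bscoll} to~$\Tor^{-i,2t}_{H^*(BT)}(H^*_T(\hoc D),\Q)$ and the index bookkeeping via \eqref{eq:barid}, is essentially what the paper does.

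There is, however, a genuine gap in your backward implication (acyclic~$\Rightarrow$ Cohen-Macaulay). Read the complex \eqref{eq:abfp} with~$H^*_T(X_b)$ in position~$0$ (the only reading under which Lemma~\ref{lm:compspec} is consistent with the nonvanishing~$i=0$ case of the theorem's first formula). Then Cohen-Macaulayness amounts to two conditions: vanishing of the homology in positions~$i\geq 1$, \emph{and} exactness at the two initial terms, i.e.\ that the natural restriction map~$H^*_T(\hoc D)\to\ker\bigl(H^*_T(X_b)\to H^{*+1}_T(X_{b+1},X_b)\bigr)$ is an isomorphism. From acyclicity you obtain only the first condition; for the identification at~$i=0$ you appeal to ``exactness of that complex at its initial term'', which is the Cohen-Macaulay property itself -- so in the direction you need it, the argument is circular. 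This is exactly the point where the paper's proof inserts the BKSS collapse for~$BS$: under acyclicity,~$H^*_T(\hoc D)\cong\lm H^*(BS)\cong H^{cw}_{d}(C;H^*(BS))$, which pins down the zeroth homology group of the complex as~$H^*_T(\hoc D)$ (alternatively, one can argue that the complex \eqref{eq:abfp} without its first term is the~$E_1$-page of the orbit filtration spectral sequence converging to~$H^*_T(\hoc D)$, so concentration of~$E_2$ in column~$0$ forces the edge homomorphism to be an isomorphism). You do invoke Theorem~\ref{thm:btoric} and Proposition~\ref{pr:borfib} later, when rewriting the~$\Tor$-groups, but they are needed earlier, inside the proof of the equivalence itself.
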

	\begin{proof}
		MVSS collapse for~$BS$ implies that the group~$H^{i}_{T}(\hoc D)$ is isomorphic to the sum of respective cohomology of the sheaf~$H^{*}(BS)$ (Theorem~\ref{thm:btoric}, Proposition~\ref{pr:borfib}).
		The latter cohomology are isomorphic to cellular cohomology by Theorem~\ref{thm:evtu}.
		(Here, notice that the natural isomorphism of $\Q$-modules implies isomorphism of $H^{*}(BT;\Q)$-modules automatically.)
		Therefore, the equivalence follows by Lemma~\ref{lm:compspec}, and the first formula holds.
		(Furthermore, the comparison SS collapses.)
		The second formula follows directly by the abutment of the comparison SS (Corollary~\ref{cor:acyc}).
	\end{proof}
	
	\begin{remark}\label{rem:orpos}
		In the conditions of Lemma~\ref{lm:compspec} we find by taking degree~$0$ component of the coefficient sheaf~$H^{*}(BS)$ (compare with Remark~\ref{rem:acycnull})
		\[
		H^{cw}_{d-i}(\Pp;\kappa\Q)=
		H_{cw}^{i}(\Pp^{op};\kappa\Q)=
		H^{i}(|\ord \Pp|;\Q)=0,\ i\neq 0.
		\]
	\end{remark}
	
	\begin{remark}\label{rem:efhoc}
		A~$T$-diagram~$D$ such that~$\hoc D$ is equivariantly formal has a fixed point of the~$T$-action.
		One can easily deduce that equivariant formality condition for~$\hoc D$ is equivalent to~$H^{odd}(\hoc D;\Q)=0$.
		Let~$D$ be such that~$\hoc D$ is equivariantly formal.
		This implies that~$H^{*}(BS)$ is an acyclic sheaf by Theorem~\ref{thm:cmdiag}.
        Then the vanishing of higher~$\Tor$-groups (by equivariant formality of~$\hoc D$) imply the formula (where on the right one has the direct sum components of~$H^{*}(\hoc D;\Q)$)
		\begin{equation}\label{eq:diagnum}
			\lm^{i} H^{j}(D)\cong
			\begin{cases}
				\Tor^{0,2j}_{H^{*}(BT)}(H^{*}(\hoc BS),\Q),\ i=j,\\
				0,\ \mbox{otherwise}.
			\end{cases}
		\end{equation}
	\end{remark}
	
	\section{Skeletons of compact nonsingular toric varieties}\label{sec:betti}
	
	Let~$D$ be any strongly reduced~$T$-diagram ($T=(S^{1})^{k}$) over any CW poset~$\Pp$.
	Suppose that~$\hoc D$ is equivariantly formal over~$\Q$ (hence,~$H^{*}(BS)$ is acyclic, and~$k=d$, where~$d:=\dim \Pp$).
	Then by Remark~\ref{rem:efhoc} the groups~$H^{odd}(\hoc D;\Q)=0$ are zero.
	Under this assumption, we compute the Betti numbers of~$\hoc D$ in terms of the respective~$h$-numbers (Proposition~\ref{pr:bettief}) and bigraded Betti numbers for all skeletons of $D$ (Corollary~\ref{cor:efbb}).
	We show that our formula agrees with the that for the skeletons of~$\C P^{m-1}$ following from~\cite{fu-19} and~\cite{gr-th-07}.
	
	\subsection{Betti numbers for~$T$-diagrams in equivariantly formal case}
	
	Recall that the~$h$-numbers of a CW poset~$\Pp$,~$\dim \Pp=d$, are given by
	\[
	h_{i}(\Pp):=\sum_{j=0}^{d} (-1)^{i-j} f_{j-1}(\Pp)\binom{d-j}{i-j},\ i=0,\dots,d,
	\]
	where~$f_{j}(\Pp)$ denotes the number of rank~$j+1$ elements in~$\Pp$ (in particular,~$f_{-1}(\Pp)=1$).
	
	\begin{proposition}\label{pr:bettief}
		For any equivariantly formal~$\hoc D$, where~$D$ is a~$T$-diagram over any CW poset~$\Pp$,~$\dim \Pp=d$, one has
		\begin{equation}\label{eq:betti1}
			b_{2i}(\hoc D)=h_{d-i}(\Pp), i=0,\dots,d.
		\end{equation}
	\end{proposition}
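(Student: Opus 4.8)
The plan is to assemble the collapse results of the previous sections and then extract each even Betti number by a column-wise Euler characteristic computation on the Bousfield--Kan first page. First I would record the standing consequences of equivariant formality collected at the start of this section: by Remark~\ref{rem:efhoc} one has $H^{odd}(\hoc D;\Q)=0$, the sheaf $H^{*}(BS)$ is acyclic, and $\rk T=\dim C=d$. Hence the orbit spectral sequence of Theorem~\ref{thm:orbss} (equivalently, the collapsed BKSS of Theorem~\ref{thm:rcollf}) together with the acyclicity formula \eqref{eq:diagnum} forces $\lm^{i} H^{j}(D)=0$ unless $i=j$. Consequently the abutment splits diagonally and, using the identification of sheaf cohomology with derived limits from Proposition~\ref{prop:limcoh},
\[
b_{2i}(\hoc D)=\dim_{\Q}\lm^{i} H^{i}(D)=\dim_{\Q} H^{i}(C^{op};H^{i}(D)),\quad i=0,\dots,d.
\]

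For the value of this dimension I would fix the inner degree $j$ and compute the Euler characteristic of the $j$-th column of the first page, that is, of the cellular cochain complex $C^{*}_{cw}(C^{op};H^{j}(D))$ of Definition~\ref{def:cellcoh}. By the diagonal vanishing just established, the cohomology of this complex is concentrated in degree $j$, so the alternating sum of its cochain dimensions equals $(-1)^{j}b_{2j}(\hoc D)$. The cochain dimensions are read off the stalks: since $D$ is strongly reduced and $k=d$, the stalk $H^{j}(D)(c)=H^{j}\bigl((S^{1})^{d-\rk c}\bigr)$ has dimension $\binom{d-\rk c}{j}$, and a cell of dimension $i$ in $C^{op}$ corresponds to an element $c$ with $\rk c=d-i$, contributing $\binom{i}{j}$. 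Writing $f_{i}(C^{op})$ for the number of $i$-dimensional cells of $C^{op}$, this yields
\[
b_{2j}(\hoc D)=\sum_{i=j}^{d}(-1)^{i-j}\binom{i}{j}\,f_{i}(C^{op}).
\]

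It then remains to identify this alternating binomial sum with $h_{d-j}(C^{op})$. This is the standard passage from the $f$-vector to the $h$-vector of the $d$-dimensional CW poset $C^{op}$, and the reconciliation of the two index conventions --- rank in the homology manifold $C$ versus cell dimension in $C^{op}$, together with the reversal $j\leftrightarrow d-j$ --- is where the homology manifold hypothesis is decisive, via the Poincar\'e--Lefschetz type duality already exploited through ZMSS collapse (Theorem~\ref{thm:zmbuch}). I expect this final combinatorial bookkeeping, matching the Euler-characteristic expression to the $h$-number definition, to be the only delicate point, since all the homotopy-theoretic input is supplied by the diagonal concentration and the BKSS collapse.
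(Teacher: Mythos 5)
Your proposal is correct and follows essentially the same route as the paper: both arguments combine the diagonal concentration \eqref{eq:diagnum} (equivariant formality plus acyclicity of~$H^{*}(BS)$) with an Euler-characteristic count on a cochain complex computing~$\lm^{*} H^{j}(D)$, using the stalk dimensions~$\binom{d-\rk c}{j}$ and then matching the resulting alternating~$f$-vector sum with the~$h$-numbers. The only cosmetic difference is that you count on the cellular complex of~$C^{op}$ (whose cohomology is identified with~$\lm^{*} H^{j}(D)$ precisely by the ZMSS collapse of Theorem~\ref{thm:zmbuch}, which is where the homology manifold hypothesis enters rather than in the final bookkeeping), whereas the paper counts chains in the order complex computing the derived limits and uses sphericity of the links~$C_{>c}$ to collapse that count to the same~$f$-vector expression.
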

	\begin{proof}
		Consider the cochain complex
		\[
		(C^{*},d)=(C^{i}_{cw}(\Pp^{op}, H^{*}(D)), d)=(C_{i}^{cw}(\Pp, H^{*}(D)), d),
		\] 
		with the standard differential.
		By the standard property of the Euler characteristic~$\chi$ of a cochain complex, one has
		\[
		\chi(C^{*}_{cw}(\Pp^{op}; H^{*}(D)))=
        \chi(H^{*}_{cw}(\Pp^{op}; H^{*}(D))).
		\]
		By the definition of the cochain complex and simplicial homology of the sphere~$\Pp_{< p}$, one has
		\[
			\chi(C^{*}_{cw}(\Pp^{op}, H^{i}(D)))=
            (-1)^{d}\chi(C_{*}^{cw}(\Pp, H^{i}(D)))=
			(-1)^{d}\sum_{j=0}^{d} (-1)^{j-1} f_{j-1}(\Pp)\binom{d-j}{i}.
		\]
		In the last equality we use~$\rk H^{i}(D(p))=\binom{d-j}{i}$,~$j=\rk p$,~$p\in \Pp$, by strong reducedness of~$D$.
		By formula \eqref{eq:diagnum} of Remark~\ref{rem:efhoc},
		\[
		\widetilde{\chi}(H^{i}_{cw}(\Pp^{op}; H^*(D)))=(-1)^{i+1} \cdot b_{2i}(\hoc D).
		\]
		The identity \eqref{eq:betti1} now follows directly from the last three identities.
	\end{proof}
	
	\begin{example}
		Let~$\Sigma$ be the fan of any compact nonsingular toric variety~$X$ of dimension~$d$.
		Then one has~$h_{d-i}(\Sigma)=h_{i}(\Sigma)$, and the above equality takes a standard form.
	\end{example}
	
	\begin{remark}
		We remark that the previous example generalizes to the case when~$\Pp$ is both a CW poset and an orientable homology manifold.
		In this case,~$\Pp$ satisfies Dehn-Sommerville relations~$h_{d-i}(\Pp)=h_{i}(\Pp)$ by Poincare-Lefschetz duality, e.g. see~\cite{ch-23},~\cite{ay-17}.
		For example, this is the case for any compact nonsingular toric variety.
	\end{remark}
	
	\subsection{Betti numbers for skeletons of~$T$-diagrams in equivariantly formal case}

	\begin{definition}
		For any CW poset~$\Pp$ let
		\[
		\iota_{q}\colon \sk^{q} \Pp=\lb p\in \Pp\colon \rk p\leq q\rb\to \Pp,\ 
		\tau_{q}\colon \cosk^{q} \Pp=\lb p\in \Pp\colon \rk p\geq q\rb\to \Pp,
		\]
		be the natural embedding of the \textit{$q$-skeleton} (\textit{$q$-coskeleton}, respectively) of~$\Pp$.
		Let~$D$ be a~$T$-diagram over~$\Pp$.
		Put~$D_{\leq q}=\iota_{q}^{*} D$ to be the diagram over~$\Pp_{\leq q}$.
		We call~$\hoc D_{\leq q}$,~$\hoc D_{\geq q}$ a \textit{$q$-skeleton} (\textit{$q$-coskeleton}, respectively) of~$\hoc D$.
	\end{definition}
	
	By Theorem~\ref{thm:cmdiag}, any skeleton of a $T$-diagram with equivariantly formal $\hoc D$ is Cohen-Macaulay.
	
	\begin{remark}
		One can check that the~$q$-skeleton~$X_{\sk^{q}\Sigma}$ of any toric variety~$X_{\Sigma}$ with~$T$-action is~$T$-equivariantly homotopy equivalent to~$\hoc (D_{\Sigma})_{\leq q}$, where~$\sk^{q}\Sigma$ denotes the fan consisting of all cones of dimension~$\leq q$ from~$\Sigma$.
		The subvariety~$X_{\sk^{q}\Sigma}$ has the natural open embedding to~$X_{\Sigma}$.
		The complement of its image is the closed subvariety consisting of all orbits of dimension~$\leq n-q-1$, where~$n=\dim T$.
		This complement is~$T$-equivariantly homotopy equivalent to~$\hoc (D_{\Sigma})_{\geq n-q-1}$ by a similar argument to~\cite{fr-10}.
	\end{remark}
		
	\begin{lemma}\label{lm:bskel}
		For any toric diagram~$D$ over~$\Pp$ one has
		\begin{equation}\label{eq:cohdes}
			\lmm^{i} H^{j}(D)_{\leq q}\cong
			\begin{cases}
				\lm^{i} H^{j}(D),\ i<q\\
				0,\ i>q.
			\end{cases}
		\end{equation}
	\end{lemma}
	\begin{proof}
		Notice that the skeleton of a CW poset is a CW poset.
		By Theorem~\ref{thm:evtu}, the sheaf cohomology from the claim is isomorphic to the cellular cohomology.
		Then the claim follows directly by the properties of cellular cohomology. 
	\end{proof}
	
	One can find the dimensions of the groups~$\lmm^{q} H^{j}(D)_{\leq q}$ using reduced Euler characteristic~$\widetilde{\chi}$ and MVSS collapse (Theorem~\ref{thm:rcollf}, or the orbit spectral sequence collapse for smooth fans~\cite{fr-06}, since a skeleton of a smooth fan is smooth).
	We give the resulting formula in the particular case below (the proof is straightforward and is omitted).
	
	\begin{corollary}\label{cor:efbb}
		Let~$D$ be any strongly reduced toric~$T$-diagram such that~$\hoc D$ is equivariantly formal.
		Then one has
		\[
		\beta^{0,2j}(D_{\leq q})=\beta^{0,2j}(D),\ j<q,
		\]
		\[
		\beta^{-i,2(i+q)}(D_{\leq q})=
		(-1)^{q+1}\sum_{p\in \sk^{q} \Pp} \binom{d-\rk p}{i+q}\widetilde{\chi}((\sk^{q} \Pp)_{> p}).
		\]
		All the remaining bigraded Betti numbers of~$D_{\leq q}$ are zero.
	\end{corollary}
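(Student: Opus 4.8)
The plan is to reduce the statement to the derived-limit computation already carried out in Theorem~\ref{thm:bskel}, via the single translation identity
\[
\beta^{-i,2j}(D_{\leq q})=\dim \lm^{j-i} H^{j}(D_{\leq q}).
\]
First I would note that this identity is exactly what Corollary~\ref{cor:acyc} delivers once it is applied to the restricted diagram $D_{\leq q}$: by definition $\beta^{-i,2j}(D_{\leq q})=\dim\Tor^{-i,2j}_{H^{*}(BT)}(\lm H^{*}(BS_{\leq q}),\Q)$, and provided the stabilizer sheaf $H^{*}(BS_{\leq q})$ is acyclic, \eqref{eq:bscoll} identifies $\lm H^{*}(BS_{\leq q})$ with $H^{*}(\col BS_{\leq q})$ while \eqref{eq:collnewss} rewrites the resulting $\Tor$-group as $\lm^{j-i}H^{j}(D_{\leq q})$, the comparison spectral sequence for $D_{\leq q}$ collapsing along the way.

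Granting the identity, I would invoke equivariant formality. By Remark~\ref{rem:efhoc} the formality of $\hoc D$ forces $\lm^{i}H^{j}(D)$ to be concentrated on the diagonal $i=j$. Substituting this into Theorem~\ref{thm:bskel}: in the range $i<q$ the restriction $\lm^{i}H^{j}(D_{\leq q})\cong\lm^{i}H^{j}(D)$ survives only for $i=j<q$; in the subtracted term of the Euler-characteristic formula for $i=q$ every summand $\lm^{l}H^{j}(D)$ with $0\le l\le q-1\le j-1$ vanishes (as $l\neq j$), so that formula collapses to the bare alternating sum; and for $i>q$ or $i>j$ the group is zero. Reading off the nonzero cases of $\lm^{j-i}H^{j}(D_{\leq q})$ then yields the three assertions: the diagonal $i=0$, $j<q$ gives $\beta^{0,2j}(D_{\leq q})=\dim\lm^{j}H^{j}(D)=\beta^{0,2j}(D)$; the line $j-i=q$ gives $\beta^{-i,2(i+q)}(D_{\leq q})=\dim\lm^{q}H^{i+q}(D_{\leq q})$, which is precisely the stated Euler-characteristic sum; and every remaining $(i,j)$ forces $\lm^{j-i}H^{j}(D_{\leq q})=0$, using in addition the below-diagonal vanishing \eqref{eq:barid}, hence $\beta=0$.

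The hard part will be the acyclicity of $H^{*}(BS_{\leq q})$, since $(C_{\leq q})^{op}$ is in general not a homology manifold and the Zeeman--McCrory route of Corollary~\ref{cor:zmss} is therefore unavailable. Here I would use the simplicial-poset structure instead: $C_{\leq q}=\sk^{q}C$ is again a simplicial poset with least element, $\lm H^{*}(BS)$ is the ring of piecewise polynomials on $C$, and acyclicity of this structure sheaf is equivalent to Cohen--Macaulayness of the corresponding face ring. Because $C^{op}$ is a $\Q$-homology manifold its order complex is Cohen--Macaulay over $\Q$, and the $q$-skeleton of a Cohen--Macaulay complex is again Cohen--Macaulay; this gives the required acyclicity. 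Alternatively, one can construct the needed injective resolution of $H^{*}(BS_{\leq q})$ directly, in parallel with the Taylor-type resolution of Lemma~\ref{lm:ayz} but applied to the stabilizer sheaf in place of $\mc{J}^{*}$, and deduce acyclicity from the fact that $C_{\leq q}$ still possesses a least element.
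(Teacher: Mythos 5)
Your proposal follows the same route as the paper's own (very terse) proof, which simply cites Theorems~\ref{thm:bskel}, \ref{thm:rcollf} and formula \eqref{eq:diagnum}: translate bigraded Betti numbers into derived limits, use equivariant formality to concentrate $\lm^{i}H^{j}(D)$ on the diagonal $i=j$, and substitute into Theorem~\ref{thm:bskel}; your bookkeeping in that part is correct. You have moreover made explicit a step the paper suppresses: since $\beta^{-i,2j}(D_{\leq q})$ is defined through $\lm H^{*}(BS_{\leq q})$, the translation $\beta^{-i,2j}(D_{\leq q})=\dim\lm^{j-i}H^{j}(D_{\leq q})$ is Corollary~\ref{cor:acyc} applied to $D_{\leq q}$, and hence genuinely requires acyclicity of the restricted sheaf $H^{*}(BS_{\leq q})$; this cannot be obtained from Theorem~\ref{thm:cmdiag}, because $(\sk^{q}C)^{op}$ is no longer a homology manifold. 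Flagging this as the missing ingredient is exactly right (the paper only returns to it in the remark following the corollary, and only for projective nonsingular toric varieties).

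The genuine gap is in your primary justification of that acyclicity. The equivalence you invoke, ``acyclicity of the structure sheaf $\Leftrightarrow$ Cohen--Macaulayness of the face ring,'' is unsupported and false as a general statement: acyclicity is the vanishing of sheaf cohomology over $(\sk^{q}C)^{op}$, whereas Cohen--Macaulayness is governed by the cosheaf homology of Lemma~\ref{lm:compspec}, i.e.\ by exactness of \eqref{eq:abfp}; Theorem~\ref{thm:cmdiag} identifies the two \emph{only} under the homology-manifold hypothesis, which is precisely what fails for skeleta, and the paper's closing remark explicitly stresses that for $\Sigma_{\leq q}$ these are different conditions. (Your auxiliary claim that a $\Q$-homology manifold has Cohen--Macaulay order complex is also false in general --- a triangulated torus --- though it does hold here because $C$ is a cone.) Fortunately the lemma you need is true, and your fallback suggestion is the correct route; it can be made precise without any Cohen--Macaulayness. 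By \eqref{eq:ndcond} one has $S(c)=\prod_{i\in a(c)}S(i)$, so every stalk is $H^{*}(BS_{\leq q})(c)\cong\Q[y_{i}\,:\,i\in a(c)]$ and the arrows send $y_{i}$ to $y_{i}$ or to $0$. Hence the sheaf splits, monomial by monomial, into a direct sum of constant sheaves $\Q$ supported on the filters $(\sk^{q}C)_{\geq c}$ (extended by zero), one filter for each face $c$ whose atom set equals the support of the monomial; these filters are disjoint for distinct such $c$ because lower intervals in a simplicial poset are Boolean. Each summand is exactly the co-induced functor $I_{c}$ characterized by $\Hm(\mc{F},I_{c})\cong\Hm(\mc{F}(c),\Q)$, which is an injective object of the functor category, so all its higher derived limits vanish; since derived limits over a finite poset commute with these degreewise finite direct sums, $H^{*}(BS_{\leq q})$ is acyclic for every $q$. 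This is the ``direct computation using the $\Z^{k}$-grading'' alluded to in the paper's final remark; note that it shows acyclicity for \emph{any} $T$-characteristic diagram over \emph{any} simplicial poset (e.g.\ two triangles glued at a vertex, whose face ring is not Cohen--Macaulay), which is why the equivalence you postulated cannot hold. With this substitution your proof is complete and agrees with the paper's.
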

	
	\begin{remark}
		The formulas of Corollary~\ref{cor:efbb} give the Betti numbers of skeletons for any toric manifold and any compact nonsingular toric variety, because one has
		\[
		\beta^{q}(\hoc D_{\leq q})=\sum_{-i+2j=q}\beta^{-i,2j}(D_{\leq q}),
		\]
		where~$D$ is the corresponding diagram to such a space, as explained for the EMSS (which is well known to abut over~$\Q$ at its page~$2$) above.
	\end{remark}
	
	\subsection{Skeletons of the complex projective space}
	
	We conclude this section with a comparison of Betti numbers formulas in the case of skeletons for~$\C P^{m-1}$ implied by~\cite{gr-th-07, fu-19} versus the above.
	
	\begin{example}
		Let~$\Pp=\Cone \partial\Delta_{m}$,~$T=T^m/S^{1}_{d}\cong T^{m-1}$, where~$S^{1}_{d}$ is the big diagonal circle, and let~$D$ be the~$T$-diagram
		\[
		D(I)=T^{m}/(T^{I}\cdot S^{1}_{d}),
		\]
		$T^I$ denotes the product in~$T^m=(S^1)^m$ of coordinate circles~$S^1_{i}$,~$i\in I$, and the arrows of~$D$ are induced by the embedding of~$T^{I}\cdot S^{1}_{d}\to T^{J}\cdot S^{1}_{d}$.
		One has~$\C P^{m-1}\cong \hoc D$~\cite{we-zi-zi-99}.
		One has
		\[
		\dim \lmm ^{i} H^{j}(D)=
		\begin{cases}
			1, \mbox{ if } i=j\leq m-1,\\
			0,\ \mbox{otherwise}.
		\end{cases}
		\]
		Consider~$\widetilde{\Pp}=\sk^{q+1} \Pp=\Cone \Delta^{q}_{m}$ (the cone over the~$q$-dimensional skeleton of the simplex~$\Delta_{m}$ on~$m$ vertices).
		Then
		\[
		\dim \lmm ^{i} H^{j}(D)_{\leq q+1}=
		\begin{cases}
			1,\ i=j\leq q,\\
			0,\ i,j\leq q, i\neq j.
		\end{cases}
		\]
		Furthermore,
		\[
		\dim \lmm ^{i} H^{j}(D)_{\leq q+1}=0,\ i>j.
		\]
		It remains to compute dimensions for~$i=q+1$ and~$j\geq q+1$.
		The number of~$i$-dimensional faces in~$\widetilde{C}$ is equal to~$\binom{m}{i+1}$,~$-1\leq i\leq q$.
		Any such face~$I$,~$\dim I=i$, has the link
		\[
		\lk_{\widetilde{\Pp}} I\cong\Delta_{m-1-i}^{q-1-i}.
		\]
		One has
		\[
		\Delta_{m}^{l}\simeq (S^{l})^{\vee \binom{m-1}{l+1}}.
		\]
		Therefore,
		\[
		\widetilde{\chi}(\lk_{\widetilde{\Pp}} I)=(-1)^{q-1-i}\binom{m-2-i}{q-i}.
		\]
		We compute
		\begin{equation}\label{eq:1mon}
			\dim \lmm^{q+1} H^{j}(D)_{\leq q+1}=
			(-1)^{q}\sum_{i=-1}^{q} \binom{m}{i+1} (-1)^{q-1-i} \binom{m-2-i}{q-i} \binom{m-2-i}{j}.
		\end{equation}
		On the other hand, by~\cite{gr-th-07, fu-19} it follows that
		\[
		\hoc D\simeq \C P^{q+1}\vee \biggl(\bigvee_{i=0}^{q}\bigvee_{b=q-i+2}^{m-1-i} (S^{q+b+i+1})^{\vee\binom{m-1-i}{b}\binom{b-1}{q-i-1}}\biggr)\vee
		\bigvee_{i=1}^{m-q-2} (S^{2q+2+i})^{\vee\binom{m-q-2}{i}}.
		\]
		This implies by a routine computation that
		\begin{equation}\label{eq:2mon}
			b_{q+1+j}(\hoc D)=\dim \lmm ^{q+1} H^{j}(D)_{\leq q+1}=
			\sum_{i=0}^{q+1}\binom{m-1-i}{j-i}\binom{j-i-1}{q-i+1}.
		\end{equation}
		We compare \eqref{eq:1mon} with \eqref{eq:2mon} as follows.
		The expressions \eqref{eq:1mon}, \eqref{eq:2mon} are identified respectively with
		\[
		\binom{m-1}{j}\binom{m-1}{q+1} \pFq{3}{2}{-q-1,-m,j-m+1}{1-m,1-m}{1},
		\]
		\[
		\binom{m-1}{j}\binom{j-1}{q+1} \pFq{3}{2}{-q-1,1,-j}{1-m,1-j}{1},
		\]
		where
		\[
		\pFq{3}{2}{a,b,c}{d,e}{1}:=\sum_{i=0}^{\infty} \frac{1}{i!}\frac{(a)_{i}(b)_{i}(c)_{i}}{(d)_{i}(e)_{i}},\ a,b,c,d,e\in\Z,
		\]
		denotes the value of the generalized hypergeometric function at unit (if the series converge), and
		\[
		(d)_{n}=d\cdot (d+1)\cdots (d+n-1),
		\]
		denotes the Pochhammer symbol.
		As expected, we obtain the equality of these two expressions directly from the identity~\cite[Appendix,~$(II)$]{sr-je-ra-92}:
		\[
		\pFq{3}{2}{a,b,c}{d,e}{1}=
		\frac{(d+e-b-c)_{n}}{(d)_{n}}\pFq{3}{2}{-n,e-b,e-c}{e,d+e-b-c}{1},\ n\in \N,
		\]
		by substituting
		\[
		n=q+1,\ b=1,\ c=-j,\ e=1-m,\ d=1-j.
		\]
	\end{example}
		
	
	\begin{bibdiv}
		\begin{biblist}[\resetbiblist{99}]
			\bibselect{biblio_eng}
		\end{biblist}
	\end{bibdiv}
	
\end{document}